\DeclareFontFamily{U}{euf}{}
\DeclareFontShape{U}{euf}{m}{n}{%
  <5><6><7><8><9>gen*eufm%
  <10><10.95><12><14.4><17.28><20.74><24.88>eufm10%
  }{}
\DeclareFontShape{U}{euf}{b}{n}{%
  <5><6><7><8><9>gen*eufb%
  <10><10.95><12><14.4><17.28><20.74><24.88>eufb10%
  }{}
\DeclareFontFamily{U}{msb}{}
\DeclareFontShape{U}{msb}{m}{n}{%
  <5><6><7><8><9>gen*msbm%
  <10><10.95><12><14.4><17.28><20.74><24.88>msbm10%
  }{}
\DeclareFontFamily{U}{msa}{}
\DeclareFontShape{U}{msa}{m}{n}{%
  <5><6><7><8><9>gen*msam%
  <10><10.95><12><14.4><17.28><20.74><24.88>msam10%
  }{}
\newtheorem{theorem}{Theorem}[section]
\newtheorem{lemma}[theorem]{Lemma}
\newtheorem{proposition}[theorem]{Proposition}
\newtheorem{corollary}[theorem]{Corollary}
\theoremstyle{definition}
\numberwithin{equation}{section}
\numberwithin{equation}{section} \frenchspacing
\begin{document}

\title[]
{An analogue of Kummer's relation
between the ideal class number and the unit index of cyclotomic fields}

\author{Su Hu}
\address{Department of Mathematics, South China University of Technology, Guangzhou 510640, China}
\address{State Key Laboratory of Cryptology, P. O. Box 5159, Beijing, 100878, China}
\email{mahusu@scut.edu.cn}

\author{Min-Soo Kim}
\address{Division of Mathematics, Science, and Computers, Kyungnam University, 7(Woryeong-dong) kyungnamdaehak-ro, Masanhappo-gu, Changwon-si,
Gyeongsangnam-do 51767, Republic of Korea}
\email{mskim@kyungnam.ac.kr}

\author{Yan Li*}
\address{Department of Applied Mathematics, China Agricultural university, Beijing 100083, China}
\address{State Key Laboratory of Cryptology, P. O. Box 5159, Beijing, 100878, China}
\email{liyan\_00@cau.edu.cn }

\thanks{*Corresponding author}

\begin{abstract}
In this paper, we obtain a formula for the special value of Euler-Dirichlet $L$-function $L_E(s,\chi)$ at $s=1$. This leads to another class number formula of $\mathbb{Q}(\mu_{m})^{+}$, the maximal real
subfield of $m$th cyclotomic field. From this formula, we construct a new type of cyclotomic units in  $\mathbb{Q}(\mu_{p^{n}})$,  which implies a similar Kummer's relation
between the ideal class number of $\mathbb{Q}(\mu_{p^{n}})^{+}$ and the unit index.\end{abstract}

\maketitle
\section{Introduction}
Throughout the paper, $\chi$ will always be a primitive Dirichlet character with conductor $f_{\chi}$ or $f$. Therefore, for $n\in \mathbb{Z}$, $\chi(n)=0\Leftrightarrow\textrm{gcd}(n,f)>1$.

An alternating form of  Dirichlet $L$-function:
\begin{equation}\label{E-zeta0}
L_E(s,\chi)=\sum_{n=1}^\infty\frac{(-1)^{n-1}\chi(n)}{n^s}, ~~\textrm{Re}(s)>0,
\end{equation}
is called Euler-Dirichlet $L$-function.
We see that $L_{E}(s,\chi)$ (\ref{E-zeta0})  is indeed the following Dirichlet eta function with a character:
\begin{equation}\label{Deta}
\eta(s)=\sum_{n=1}^{\infty}\frac{(-1)^{n-1}}{n^{s}},~~~~\textrm{Re}(s)>0.
\end{equation}
The above $\eta(s)$ is a particular case of Witten's zeta functions in mathematical physics (see \cite[p. 248, (3.14)]{Min}) and it has been used by Euler to obtain a functional equation of Riemann zeta function $\zeta(s)$ (see \cite[p.273--276]{Weil}).
There is also a connection between $L_{E}(s,\chi)$ and the ideal class group of the
$p^{n+1}$-th cyclotomic field where $p$ is a prime number.
For details, we refer to a recent paper~\cite{HK-I}, especially~\cite[Propositions 3.2 and 3.4]{HK-I}.

 In this paper, we obtain a formula for the special value of  Euler-Dirichlet $L$-function $L_E(s,\chi)$ at $s=1$ (see Theorem \ref{main1}). As a consequence, we get another class number formula of $\mathbb{Q}(\mu_{m})^{+}$, the maximal real
subfield of $m$th cyclotomic field  (see Theorem \ref{main2}).
From this formula, we construct a new type of cyclotomic units in  $\mathbb{Q}(\mu_{p^{n}})$ (see Eq. (\ref{cycunits})), which implies a similar Kummer's relation
between the class number of $\mathbb{Q}(\mu_{p^{n}})^{+}$ and the unit index (see Theorem  \ref{Cor2}).
\subsection{Background}
The Dirichlet $L$-function $L(s,\chi)$ ($L$-series)  is  defined by
\begin{equation}\label{D-zeta}
L(s,\chi)=\sum_{n=1}^\infty\frac{\chi(n)}{n^s},
\end{equation}
where the series on the right is absolutely convergent for Re$(s)>1$ and is conditionally convergent for Re$(s)>0$ for non-principal $\chi$
(see \cite{HKT}).
Let \begin{equation}\label{ex-re}
\zeta_{f}=e^{\frac{2\pi i}f}=\cos\left(\frac{2\pi}{f}\right)+i\sin\left(\frac{2\pi}f\right)
\end{equation}
be a primitive $f$th root of unity and
\begin{equation}\label{Gauss}
\tau(\chi)=\sum_{r~(\text{mod}~f)}\chi(r)\zeta_{f}^r,
\end{equation}
be the Gauss sum corresponding to the character $\chi$,
where $r$ runs through a full (or a reduced) system of residues modulo $f.$
If $\chi$ is non-principal, the following result on the special value of $L(s,\chi)$ at $s=1$ is well-known.
\begin{theorem}[See Washington  {\cite[p.38, Theorem 4.9]{Washington}}]\label{C1}
\begin{equation}\label{L1}
\begin{aligned} L(1,\chi)&=-\frac{2\tau(\chi)}{f}\sum_{1\leq k<f/2}\overline{\chi}(k)\log \sin\left(\frac{k\pi}{f}\right)
\\&=-\frac{\tau(\chi)}f \sum_{k=1}^{f} \overline{\chi}(k)\log |1-\zeta_{f}^k|,
~~&\textrm{if}~\chi~\textrm{ is even}, ~\textrm{i.e.}~\chi(-1)=1.\\
 L(1,\chi)&= \frac{\pi i \tau(\chi)}{f^{2}}\sum_{k=1}^{f} \overline{\chi}(k)k,
~~&\textrm{if}~\chi~\textrm{ is odd}, ~\textrm{i.e.}~\chi(-1)=-1.
\end{aligned}
\end{equation}
\end{theorem}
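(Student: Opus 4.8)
The plan is to evaluate the conditionally convergent Dirichlet series at $s=1$ by feeding in the finite Fourier expansion of the primitive character $\chi$ in terms of Gauss sums; this is the classical argument (the statement is, after all, quoted from Washington), so I will only indicate the steps. First I would use that, since $\chi$ is non-principal, $\sum_{n\ge 1}\chi(n)n^{-s}$ converges for $\mathrm{Re}(s)>0$, so that by Abel's theorem $L(1,\chi)=\sum_{n\ge 1}\chi(n)/n$. Next, primitivity of $\chi$ gives the separation identity $\sum_{r=1}^{f}\overline{\chi}(r)\zeta_f^{rn}=\chi(n)\tau(\overline{\chi})$ for every integer $n$, and combining it with the evaluation $\tau(\chi)\tau(\overline{\chi})=\chi(-1)f$ (itself a consequence of $\overline{\tau(\chi)}=\chi(-1)\tau(\overline{\chi})$ and $\lvert\tau(\chi)\rvert^{2}=f$) yields
\[
\chi(n)=\frac{\chi(-1)\tau(\chi)}{f}\sum_{r=1}^{f}\overline{\chi}(r)\,\zeta_f^{rn}.
\]

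Then I would substitute this into $L(1,\chi)=\sum_{n\ge 1}\chi(n)/n$ and interchange the \emph{finite} sum over $r$ with the sum over $n$: the term $r=f$ drops out because $\overline{\chi}(f)=0$, and for $1\le r\le f-1$ the inner series $\sum_{n\ge 1}\zeta_f^{rn}/n$ converges with value $-\log(1-\zeta_f^{r})$, by Abel's theorem applied to the power series $-\log(1-z)=\sum_{n\ge 1}z^{n}/n$. This produces the single identity
\[
L(1,\chi)=-\frac{\chi(-1)\tau(\chi)}{f}\sum_{r=1}^{f-1}\overline{\chi}(r)\log(1-\zeta_f^{r}),
\]
valid for both parities, from which the two cases of the theorem are then extracted.

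To finish I would write, with the principal branch of the logarithm, $1-\zeta_f^{r}=2\sin(\pi r/f)\,e^{i(\pi r/f-\pi/2)}$ for $0<r<f$, hence $\log(1-\zeta_f^{r})=\log\bigl(2\sin(\pi r/f)\bigr)+i\bigl(\pi r/f-\pi/2\bigr)$, and pair the index $r$ with $f-r$, using $\overline{\chi}(f-r)=\chi(-1)\overline{\chi}(r)$ together with $\sum_{r=1}^{f}\overline{\chi}(r)=0$. When $\chi$ is even the imaginary parts cancel pairwise, leaving $L(1,\chi)=-\tfrac{\tau(\chi)}{f}\sum_{k=1}^{f}\overline{\chi}(k)\log\lvert 1-\zeta_f^{k}\rvert$, which is the second even-character formula; substituting $\lvert 1-\zeta_f^{k}\rvert=2\sin(k\pi/f)$, dropping the harmless $\log 2$ (again because $\sum_{k}\overline{\chi}(k)=0$), and folding $k\leftrightarrow f-k$ via $\sin((f-k)\pi/f)=\sin(k\pi/f)$ then gives the first formula, with the factor $2$ and the range $1\le k<f/2$. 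When $\chi$ is odd the $\log(2\sin)$ parts cancel pairwise instead, and only the imaginary part $i\sum_{r}\overline{\chi}(r)(\pi r/f-\pi/2)=\tfrac{\pi i}{f}\sum_{r}\overline{\chi}(r)r$ survives, giving $L(1,\chi)=\tfrac{\pi i\tau(\chi)}{f^{2}}\sum_{k=1}^{f}\overline{\chi}(k)k$. The only steps needing real care are fixing the branch of the logarithm — equivalently, checking that $\arg(1-\zeta_f^{r})\in(-\pi/2,\pi/2)$ for $0<r<f$ — and justifying, by Abel summation, that the rearrangement of the conditionally convergent series is legitimate; everything else is bookkeeping with the elementary identities $\sum_{k}\overline{\chi}(k)=0$ and $\sin((f-k)\pi/f)=\sin(k\pi/f)$.
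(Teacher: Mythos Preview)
Your argument is correct and is essentially the classical one. The paper itself does not prove this theorem (it is cited from Washington as background), but its detailed proof of the companion Theorem~\ref{main1} for $L_E(1,\chi)$ follows exactly the route you take: insert the Gauss-sum expansion of the character into the series, evaluate the resulting logarithmic power series at $s=1$ via Abel's theorem, and then separate the even and odd cases by pairing $k$ with $-k$ (equivalently $r$ with $f-r$) and reading off real and imaginary parts.
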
 This formula has an important application in algebraic number theory
by connecting $L(1,\chi)$ with the class number formula of abelian fields $K/\mathbb{Q}$, that is:
\begin{theorem}[See Lang  {\cite[p.77]{Lang}}]\label{C2}
\begin{equation}\label{CNF}
\frac{2^{r_{1}}(2\pi)^{r_{2}}hR}{wd^{1/2}}=\prod_{\chi\neq 1}L(1,\chi),
\end{equation}
where the product is taken over all the primitive characters induced by the characters of Gal($K/\mathbb{Q}$)
and
$$\begin{aligned}
w&=w_{K}=\textrm{number~~ of~~ roots~~ of~~ unity~~ in~~} K,\\
h&=h_{K}=\textrm{class~~ number~~ of~~} K,\\
R&=R_{K}=\textrm{regulator~~ of~~} K,\\
d&=d_{K}=\textrm{absolute~~ value~~ of~~ the~~ discriminant.}
\end{aligned}$$
If $K$ is real, then $r_{1}=[K:\mathbb{Q}]$ and $r_2=0$; if $K$ is complex, then $r_{1}=0$ and $r_{2}=\frac{1}{2}[K:\mathbb{Q}]$.
\end{theorem}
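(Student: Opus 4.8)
The plan is to obtain (\ref{CNF}) by comparing two expressions for the residue of the Dedekind zeta function $\zeta_K(s)=\sum_{\mathfrak a}N\mathfrak a^{-s}$ at $s=1$. The first comes from the factorization $\zeta_K(s)=\prod_{\chi}L(s,\chi)$ over the primitive Dirichlet characters attached to $\mathrm{Gal}(K/\mathbb Q)$: since $K/\mathbb Q$ is abelian, Kronecker--Weber gives $K\subseteq\mathbb Q(\mu_n)$ for some $n$, restriction identifies $\mathrm{Gal}(K/\mathbb Q)$ with a quotient of $(\mathbb Z/n\mathbb Z)^\times$, and its character group is a subgroup $X\subseteq\widehat{(\mathbb Z/n\mathbb Z)^\times}$; replacing each $\chi\in X$ by its primitive representative of conductor $f_\chi$, one proves the factorization by matching Euler factors at each rational prime. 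At an unramified $p$ the local factor of $\zeta_K$ is $(1-p^{-fs})^{-g}$ with $f$ the common residue degree (the order of $\mathrm{Frob}_p$) and $fg=[K:\mathbb Q]$, while $\prod_{\chi\in X}(1-\chi(p)T)=(1-T^{f})^{g}$ by the elementary character theory of cyclic groups; a ramified $p$, at which $\chi(p)=0$ exactly when $p\mid f_\chi$, needs the analogous but more delicate bookkeeping with the ramification index. (Because complex conjugation is a single element of the abelian group $\mathrm{Gal}(K/\mathbb Q)$, $K$ must be totally real or totally complex, which accounts for the stated values of $r_1$ and $r_2$.)

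Taking residues at $s=1$ then gives one side of (\ref{CNF}): the factor from the trivial character is $L(s,1)=\zeta(s)$, which has a simple pole of residue $1$ and is the only factor with a pole; each $L(s,\chi)$ with $\chi\neq 1$ is entire, and the nonvanishing $L(1,\chi)\neq 0$ (equivalently, the fact that $\zeta_K$ has at most a simple pole) yields $\mathrm{Res}_{s=1}\zeta_K(s)=\prod_{\chi\neq 1}L(1,\chi)$. Independently I would prove $\mathrm{Res}_{s=1}\zeta_K(s)=2^{r_1}(2\pi)^{r_2}hR/(w\,d^{1/2})$ by the classical argument: write $\zeta_K(s)=\sum_{C\in\mathrm{Cl}(K)}\sum_{\mathfrak a\in C}N\mathfrak a^{-s}$, and in each ideal class $C$ count the integral ideals of norm at most $X$ by Dirichlet's lattice-point argument in the Minkowski space $K\otimes\mathbb R$ --- the count is $\kappa X+O(X^{1-1/[K:\mathbb Q]})$ with $\kappa=2^{r_1}(2\pi)^{r_2}R/(w\,d^{1/2})$ the normalized volume of a fundamental domain for the action of the units --- then sum over the $h$ classes and pass from this asymptotic to the residue via the standard relation between partial sums of Dirichlet-series coefficients and residues. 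Equating the two formulas for $\mathrm{Res}_{s=1}\zeta_K(s)$ gives (\ref{CNF}).

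The main obstacle is the analytic class number formula itself, and within it the volume computation identifying the measure of a fundamental domain for the unit action with the regulator, including the factor $2^{r_1}(2\pi)^{r_2}/w$ coming from the roots of unity and the Archimedean embeddings, together with the error term in the lattice-point count. By contrast the factorization step is purely formal character theory, its only subtlety being the correct treatment of conductors and ramification indices at the ramified primes.
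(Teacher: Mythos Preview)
The paper does not prove this theorem; it is quoted as background with a bare citation to Lang \cite[p.~77]{Lang}, so there is no ``paper's own proof'' to compare against. Your outline is the standard argument (factorize $\zeta_K$ as $\prod_\chi L(s,\chi)$ via the conductor-discriminant machinery, take residues at $s=1$, and match against the analytic class number formula for $\zeta_K$), and it is correct as a sketch; this is essentially what one finds in Lang or Washington.
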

Assume that $m$ is odd or $m\equiv 0~(\textrm{mod}~4)$, $K=\mathbb{Q}(\mu_{m})$ and $K^{+}=\mathbb{Q}(\mu_{m})^{+}$
be the $m$-th cyclotomic field and its maximal real subfield, respectively; $h$ and $h^{+}$ be the class number of $K$ and $K^{+}$, respectively.

For convenience of the notations, we also denote $\mathbb{Z}/m\mathbb{Z}$ by $\mathbb{Z}(m)$.
Let $f_{\chi}$ be the conductor of $\chi$. Introduce the group
\begin{equation}
\begin{aligned}
G=\mathbb{Z}(m)^{*}/\pm 1~\textrm{and}~G_{\chi}=\mathbb{Z}(f_{{\chi}})^{*}/\pm 1
\end{aligned}
\end{equation}
Combining (\ref{L1}) and (\ref{CNF}), we have the class number formula of $K^{+}$.
\begin{theorem}[See Lang  {\cite[p.81]{Lang}}]\label{C3}
\begin{equation}\label{CNF+}
h^{+}=\frac{1}{R^{+}}\prod_{\chi\neq 1}\sum_{k\in G_{\chi}}-\chi(k)\log |1-\zeta_{f_{\chi}}^{k}|,
\end{equation}
where the product over $\chi\neq 1$ is taken over the non-trivial characters of $G$, or equivalently, the non-trivial even characters of $\mathbb{Z}(m)^{*}$.
\end{theorem}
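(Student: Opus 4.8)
The plan is to obtain (\ref{CNF+}) by specializing the analytic class number formula (\ref{CNF}) to the totally real field $K = K^{+} = \mathbb{Q}(\mu_{m})^{+}$ and inserting the even‑character evaluation of $L(1,\chi)$ from (\ref{L1}). Here the hypothesis that $m$ is odd or $m \equiv 0 \pmod 4$ plays its usual role: it guarantees that $[\mathbb{Q}(\mu_{m}):\mathbb{Q}] = \varphi(m)$, that the characters of $\mathrm{Gal}(K^{+}/\mathbb{Q}) \cong G$ are precisely the even Dirichlet characters whose conductor divides $m$, and that the only roots of unity in $K^{+}$ are $\pm 1$. Consequently, in (\ref{CNF}) one has $r_{1} = \varphi(m)/2$, $r_{2} = 0$, $w = 2$, and, writing $N = \varphi(m)/2 - 1$ for the number of non‑trivial such $\chi$, the left‑hand side of (\ref{CNF}) becomes $2^{N} h^{+} R^{+}/(d^{+})^{1/2}$, where $d^{+}$ is the absolute discriminant of $K^{+}$.

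First I would rewrite the inner sum. For each non‑trivial even $\chi$, the even case of (\ref{L1}) gives $L(1,\chi) = -\bigl(2\tau(\chi)/f_{\chi}\bigr)\sum_{1 \le k < f_{\chi}/2}\overline{\chi}(k)\log\sin(k\pi/f_{\chi})$. Since $|1-\zeta_{f_{\chi}}^{k}| = 2\sin(k\pi/f_{\chi})$ for $1 \le k < f_{\chi}$ and $\sum_{k}\overline{\chi}(k) = 0$, the $\log 2$ contributions cancel; and because $|1-\zeta_{f_{\chi}}^{k}| = |1-\zeta_{f_{\chi}}^{-k}|$ while $\chi(-1) = 1$, this sum equals $\sum_{k \in G_{\chi}}\overline{\chi}(k)\log|1-\zeta_{f_{\chi}}^{k}|$. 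Replacing $\overline{\chi}$ by $\chi$ is harmless after taking the product over $\chi$, since complex conjugation permutes the set of non‑trivial even characters; so, up to one extra factor $-1$ per character, the inner factor becomes exactly $\sum_{k\in G_{\chi}}-\chi(k)\log|1-\zeta_{f_{\chi}}^{k}|$.

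Next I would multiply out the product $\prod_{\chi \neq 1}L(1,\chi)$ and track the constants: the $N$ factors $-1$ cancel against the $N$ signs just introduced; the $N$ factors $2$ give $2^{N}$; and the Gauss sums and conductors combine as $\prod_{\chi\neq1}\tau(\chi)/f_{\chi} = (d^{+})^{-1/2}$. This last identity is the one genuinely non‑formal ingredient: by the conductor–discriminant formula $d^{+} = \prod_{\chi}f_{\chi}$ (product over all characters of $\mathrm{Gal}(K^{+}/\mathbb{Q})$, the trivial one contributing $1$), it suffices to show $\prod_{\chi\neq1}\tau(\chi) = (d^{+})^{1/2}$, and this follows by pairing each complex $\chi$ with $\overline{\chi}$ via $\tau(\chi)\tau(\overline{\chi}) = \chi(-1)f_{\chi} = f_{\chi}$ for primitive even $\chi$, together with Gauss's evaluation $\tau(\chi) = +\sqrt{f_{\chi}}$ for a primitive even quadratic $\chi$, so that each conjugate pair contributes $f_{\chi}$ and each quadratic $\chi$ contributes $\sqrt{f_{\chi}}$. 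Putting this together, the right‑hand side of (\ref{CNF}) equals $\bigl(2^{N}/(d^{+})^{1/2}\bigr)\prod_{\chi\neq1}\sum_{k\in G_{\chi}}-\chi(k)\log|1-\zeta_{f_{\chi}}^{k}|$; equating with the left‑hand side $2^{N}h^{+}R^{+}/(d^{+})^{1/2}$ from the first paragraph, the powers of $2$ and the factors $(d^{+})^{-1/2}$ cancel, yielding $h^{+}R^{+} = \prod_{\chi\neq1}\sum_{k\in G_{\chi}}-\chi(k)\log|1-\zeta_{f_{\chi}}^{k}|$, which is (\ref{CNF+}).

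I expect the main obstacle to be entirely in the bookkeeping of the constants — the interplay of $2^{r_{1}}$, the factor $w = 2$, and the constant $2$ in the even case of (\ref{L1}), together with the sign $-\chi(k)$ — and, the only step that is not formal manipulation, the evaluation of $\prod_{\chi\neq1}\tau(\chi)$ with the correct sign via the conductor–discriminant formula and Gauss's sign theorem for quadratic Gauss sums.
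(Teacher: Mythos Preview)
Your proposal is correct and follows exactly the route the paper indicates: the paper does not spell out a proof of Theorem~\ref{C3} but merely cites Lang and states that it follows ``combining (\ref{L1}) and (\ref{CNF})''; you have carried out precisely that combination, with the same bookkeeping the paper later performs explicitly in its proof of Theorem~\ref{main2} (conductor--discriminant formula and the identity $\prod_{\chi}\tau(\chi)=d^{1/2}$). The only cosmetic difference is that the paper, in the analogous argument for Theorem~\ref{main2}, invokes \cite[p.~36, Corollary~4.6]{Washington} (functional equation) for $\prod_{\chi}\tau(\chi)=d^{1/2}$, whereas you obtain it by pairing $\chi$ with $\overline{\chi}$ together with Gauss's sign for even quadratic characters; both are standard and yield the same constant.
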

For $k$ prime to $m$, we let
\begin{equation*}
g_{k}=\frac{\zeta_{m}^{k}-1}{\zeta_{m}-1}.
\end{equation*}
Then $g_{k}$ is called a cyclotomic unit. It is easy to see that $g_{k}$ is equal to a real unit times a root of unity. Since $\zeta_{m}^{k}$ only depends  on the residue class of $k$ mod $m$, without loss
of generality, we may assume that $k$ is odd.
Then
\begin{equation*}
\zeta_{m}^{-v}g_{k}~~\textrm{for}~v=\frac{k-1}{2}
\end{equation*}
is real (i.e. fixed under $\sigma_{-1}$), and call it the real cyclotomic unit.

Let $E$ be the group of unit in $K$ and $\mathcal{E}$ be the the subgroup of $E$ generated by the roots of unity and the cyclotomic units.
Let $E^{+}$ be the group of unit in $K^{+}$ and $\mathcal{E}^{+}$ be the the subgroup of $E^{+}$ generated by $\pm 1$ and real cyclotomic units.
Assume $m=p^{n}$ is a prime power, we have
\begin{equation*}
E/\mathcal{E}\cong E^{+}/\mathcal{E}^{+}.
\end{equation*}
 From (\ref{CNF+}) and the Dedekind determinant formula (see Theorem \ref{DF}), we have the following Kummer's result
related to the class number $h^{+}$ and the unit indexes  $(E:\mathcal{E})=(E^{+}:\mathcal{E}^{+})$.
\begin{theorem}[See Lang  {\cite[p.85, Theorem 5.1]{Lang}}]\label{C4}
 Let $K=\mathbb{Q}(\mu_{m})$, $K^{+}=\mathbb{Q}(\mu_{m})^{+}$ and  $h^{+}$ be the class number of $K^{+}$. Assume $m=p^{n}$ is a prime power. Then
\begin{equation*}
h^{+}=(E:\mathcal{E})=(E^{+}:\mathcal{E}^{+}).
\end{equation*}
\end{theorem}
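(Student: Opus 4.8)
The plan is to deduce the equality $h^{+}=(E^{+}:\mathcal{E}^{+})$ by a regulator comparison, after which the quoted isomorphism $E/\mathcal{E}\cong E^{+}/\mathcal{E}^{+}$ supplies the remaining identity $h^{+}=(E:\mathcal{E})$. Write $d=\varphi(p^{n})/2=[K^{+}:\mathbb{Q}]$, so $K^{+}$ has unit rank $r=d-1$, and note $r=\#(G\setminus\{1\})$ for $G=\mathbb{Z}(p^{n})^{*}/\pm1$. First I would fix representatives $a$ of $G\setminus\{1\}$ (say $2\le a\le(p^{n}-1)/2$, $p\nmid a$) and record that $\mathcal{E}^{+}$ is generated by $\pm1$ together with the $r$ real cyclotomic units $\xi_{a}=\zeta_{p^{n}}^{-(a-1)/2}g_{a}$; since these number $r=\operatorname{rank}E^{+}$, the index $(E^{+}:\mathcal{E}^{+})$ is finite exactly when the $\xi_{a}$ are multiplicatively independent, and in that case $(E^{+}:\mathcal{E}^{+})=R_{\mathcal{E}}/R^{+}$, where $R_{\mathcal{E}}=\abs{\det\bigl(\log\abs{\sigma_{c}\xi_{a}}\bigr)_{a,c\in G\setminus\{1\}}}$ --- the root of unity in $\xi_{a}$ being invisible to $\abs{\cdot}$, and the embedding $\sigma_{1}$ being the archimedean place dropped in favor of the product formula $\sum_{v}\log\abs{\xi_{a}}_{v}=0$.

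The core of the argument is the evaluation of $R_{\mathcal{E}}$. Since $\log\abs{\sigma_{c}\xi_{a}}=\log\abs{1-\zeta_{p^{n}}^{ac}}-\log\abs{1-\zeta_{p^{n}}^{c}}$, the full $d\times d$ matrix $M=\bigl(\log\abs{1-\zeta_{p^{n}}^{ac}}\bigr)_{a,c\in G}$ is a group matrix for $G$ (here $\abs{1-\zeta_{p^{n}}^{c}}$ depends only on $c$ mod $\pm1$, and the relevant characters of $\mathbb{Z}(p^{n})^{*}$ are even, so they descend to $G$). Elementary row operations together with one Laplace expansion identify $\abs{\det M}$ with $\frac12(\log p)\,R_{\mathcal{E}}$, using $\sum_{c\in G}\log\abs{1-\zeta_{p^{n}}^{c}}=\frac12\log\Phi_{p^{n}}(1)=\frac12\log p$. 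On the other hand the Dedekind determinant formula (Theorem \ref{DF}) gives $\det M=\pm\prod_{\chi}\sum_{c\in G}\chi(c)\log\abs{1-\zeta_{p^{n}}^{c}}$ over the even characters $\chi$ mod $p^{n}$, the trivial character contributing precisely the factor $\frac12\log p$; a conductor reduction (in which no Euler factor intervenes, because $p$-power moduli telescope cleanly) rewrites each remaining factor as $\sum_{k\in G_{\chi}}\chi(k)\log\abs{1-\zeta_{f_{\chi}}^{k}}$. Hence $R_{\mathcal{E}}=\abs{\prod_{\chi\neq1}\sum_{k\in G_{\chi}}\chi(k)\log\abs{1-\zeta_{f_{\chi}}^{k}}}$. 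By Theorem \ref{C1} each factor is a nonzero scalar multiple of $L(1,\overline{\chi})\neq0$; this non-vanishing establishes the multiplicative independence of the $\xi_{a}$, hence finiteness of the index, and comparison with the class number formula (\ref{CNF+}) yields $(E^{+}:\mathcal{E}^{+})=R_{\mathcal{E}}/R^{+}=h^{+}$, completing the proof.

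The main obstacle is the linear-algebra bookkeeping in the Dedekind step: aligning the indexing set of cyclotomic units with the character group of $G$, pinning down exactly which single place and which single character are removed in passing from the $d\times d$ group matrix to the $r\times r$ regulator matrix (the trivial character supplies both, with its $\frac12\log p$ contribution canceling against the same factor that appears on the regulator side), and verifying that the root-of-unity twists $\zeta_{p^{n}}^{-(a-1)/2}$ and the conductor reductions introduce no spurious constants. The prime-power hypothesis is used throughout and cannot be dropped casually: it makes $1-\zeta_{p^{n}}$ a generator of the unique ramified prime, so that every $g_{a}$ is a genuine unit; it forces every character occurring in (\ref{CNF+}) to have $p$-power conductor, so that the telescoping conductor reduction applies without Euler factors; and it is what underlies the isomorphism $E/\mathcal{E}\cong E^{+}/\mathcal{E}^{+}$ used at the end. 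One should also dispose separately of the degenerate cases in which $K^{+}=\mathbb{Q}$ and there is nothing to prove.
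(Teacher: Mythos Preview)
Your proposal is correct and follows exactly the route the paper indicates: the paper itself treats Theorem~\ref{C4} as a background result, giving only the one-line sketch ``From (\ref{CNF+}) and the Dedekind determinant formula (see Theorem~\ref{DF}), we have the following Kummer's result,'' and your argument is precisely a fleshed-out version of that sketch---combine Theorem~\ref{C3} with the Dedekind determinant applied to $f(c)=\log\abs{1-\zeta_{p^{n}}^{c}}$, reduce conductors, and read off the regulator quotient. The paper's detailed proof of its own analogue (Theorem~\ref{Cor2}, carried out in Section~5 via Lemma~\ref{Lemma1}, Lemma~\ref{Lemma2}, and Proposition~\ref{main3}) is structurally identical to what you wrote, just with $1+\zeta$ in place of $1-\zeta$.
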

\subsection{Our results}
First, we will calculate the special value of $L_E(s,\chi)$ at $s=1$ (comparing with Theorem~\ref{C1} above).
\begin{theorem}\label{main1}
\begin{equation}\label{LE1}
\begin{aligned}
L_E(1,\chi)&=\frac{2\tau(\chi)}{f}\sum_{1\leq k<f/2} \overline{\chi}(k)\log\left|\cos\left(\frac{\pi k}{f}\right)\right|     \\
&=\frac{\tau(\chi)}f \sum_{k=1}^{f} \overline{\chi}(k)\log |1+\zeta_{f}^k|,\\&\textrm{if}~\chi~\textrm{ is even}, ~\textrm{i.e.}~\chi(-1)=1.\\
L_E(1,\chi)&=-\frac{2\pi i \tau(\chi)}{f^2}\sum_{1\leq k < f/2}\overline{\chi}(k)k,\\&\textrm{if}~\chi~\textrm{ is odd}, ~\textrm{i.e.}~\chi(-1)=-1.
 \end{aligned}
 \end{equation}
 \end{theorem}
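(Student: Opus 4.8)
The plan is to run the classical argument behind Theorem~\ref{C1}, with the series $\sum_{n\ge1}x^{n}/n=-\log(1-x)$ replaced throughout by its alternating companion
\[
\sum_{n=1}^{\infty}\frac{(-1)^{n-1}}{n}x^{n}=\log(1+x),\qquad |x|\le 1,\ x\neq -1 .
\]
Since $\chi$ is primitive of conductor $f$, the separability relation $\sum_{r~(\mathrm{mod}~f)}\chi(r)\zeta_{f}^{rn}=\overline{\chi}(n)\tau(\chi)$ (valid for every $n$) together with $\tau(\chi)\tau(\overline{\chi})=\chi(-1)f$ gives the finite Fourier expansion $\chi(n)=\tfrac{\chi(-1)\tau(\chi)}{f}\sum_{r=1}^{f}\overline{\chi}(r)\zeta_{f}^{rn}$. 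Plugging this into \eqref{E-zeta0} at $s=1$ and exchanging the order of summation yields
\[
L_E(1,\chi)=\frac{\chi(-1)\tau(\chi)}{f}\sum_{r=1}^{f}\overline{\chi}(r)\sum_{n=1}^{\infty}\frac{(-1)^{n-1}}{n}\zeta_{f}^{rn}
=\frac{\chi(-1)\tau(\chi)}{f}\sum_{r=1}^{f}\overline{\chi}(r)\log\!\bigl(1+\zeta_{f}^{r}\bigr).
\]

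The remainder is bookkeeping with roots of unity. Writing $1+\zeta_{f}^{r}=1+e^{2\pi ir/f}=2\cos(\pi r/f)\,e^{\pi i r/f}$, one has $|1+\zeta_{f}^{r}|=2\,|\cos(\pi r/f)|$ always, and $\arg(1+\zeta_{f}^{r})=\pi r/f$ when $1\le r<f/2$; the indices $r=f$ and, when $f$ is even, $r=f/2$ (where $\log$ or $\arg$ degenerates) drop out because $\overline{\chi}(r)=0$ there. Combine this with the pairing $r\leftrightarrow f-r$, which uses $\zeta_{f}^{f-r}=\overline{\zeta_{f}^{r}}$ and $\overline{\chi}(f-r)=\chi(-1)\overline{\chi}(r)$. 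If $\chi$ is even, the imaginary parts cancel in pairs while the real parts double, so only $\sum_{r}\overline{\chi}(r)\log|1+\zeta_{f}^{r}|$ survives; folding onto $1\le k<f/2$ and cancelling the constant term $\log 2\cdot\sum_{r}\overline{\chi}(r)=0$ (here we use $\chi\neq 1$) produces both even-case identities of \eqref{LE1}. If $\chi$ is odd, the log-modulus parts cancel and the arguments double, leaving $-\tfrac{i\tau(\chi)}{f}\sum_{r}\overline{\chi}(r)\arg(1+\zeta_{f}^{r})$; folding onto $1\le k<f/2$ and substituting $\arg(1+\zeta_{f}^{k})=\pi k/f$ gives the odd-case identity.

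The only nonformal step — and the main thing to be careful about — is justifying the interchange of $\sum_{n}$ with the evaluation at the boundary $s=1$, i.e. that the conditionally convergent series may be summed termwise after the Fourier substitution. I would dispose of this exactly as in the proof of Theorem~\ref{C1}: the partial sums of $\sum_{n}(-1)^{n-1}\chi(n)\zeta_{f}^{rn}$ are bounded uniformly in the finitely many $r$, so by Abel's theorem (Dirichlet's test) $\sum_{n}(-1)^{n-1}\chi(n)\zeta_{f}^{rn}n^{-s}$ converges uniformly for $s\ge 1$, and one may let $s\downarrow1$ inside the finite $r$-sum. Everything else is elementary.

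Finally, it is worth recording a shortcut that bypasses the analysis altogether: separating even- and odd-indexed terms gives $L_E(s,\chi)=\bigl(1-2^{1-s}\chi(2)\bigr)L(s,\chi)$ for all $s$ (trivially so when $f$ is even, since then $\chi(2)=0$ and no even $n$ contributes), hence $L_E(1,\chi)=(1-\chi(2))L(1,\chi)$; feeding in Theorem~\ref{C1}, the identity $|1-\zeta_{f}^{2k}|=|1-\zeta_{f}^{k}|\,|1+\zeta_{f}^{k}|$, and the bijection $k\mapsto 2k\pmod f$ (valid for $f$ odd) then reproduces \eqref{LE1}, at the cost of splitting into the cases $f$ odd and $f$ even.
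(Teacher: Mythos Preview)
Your proof is correct and follows essentially the same route as the paper: both reduce $L_E(1,\chi)$ to a finite sum of $\log(1+\zeta_f^{\pm k})$ via the finite Fourier expansion of $\chi$ (the paper derives it through orthogonality of roots of unity, you quote it as $\chi(n)=\tfrac{\chi(-1)\tau(\chi)}{f}\sum_r\overline{\chi}(r)\zeta_f^{rn}$), invoke Abel's theorem for the boundary evaluation, and then pair $k\leftrightarrow f-k$ to separate modulus and argument. Your closing shortcut via $L_E(s,\chi)=(1-2^{1-s}\chi(2))L(s,\chi)$ and Theorem~\ref{C1} is a genuine alternative the paper does not use here, though that identity does appear later in the proof of Theorem~\ref{main2}.
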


Assume that $m$ is odd or $m\equiv 0~(\textrm{mod}~4)$, $K=\mathbb{Q}(\mu_{m})$ and $K^{+}=\mathbb{Q}(\mu_{m})^{+}$
be the $m$-th cyclotomic field and its maximal real subfield, respectively; $h$ and $h^{+}$ be the class number of $K$ and $K^{+}$, respectively.
Denote by
\begin{equation}\label{etadef}\eta=\prod_{\chi\neq 1}(1-\chi(2))\end{equation}
where the product over $\chi\neq 1$ is taken over the non-trivial characters of $G$, or equivalently, the non-trivial even characters  $\mathbb{Z}(m)^{*}$.
Combing (\ref{LE1}) and the class number formula of abelian fields (see (\ref{CNF})), we also have another class
number formula of $K^{+}$ (comparing with Theorem \ref{C3} above).
\begin{theorem}\label{main2}
\begin{equation}\label{CNF+A}
\eta h^{+}=\frac{1}{R^{+}}\prod_{\chi\neq 1}\sum_{k\in G_{\chi}}\chi(k)\log |1+\zeta_{f_{\chi}}^{k}|,
\end{equation}
where the product over $\chi\neq 1$ is taken over the non-trivial characters of $G$, or equivalently, the non-trivial even characters  $\mathbb{Z}(m)^{*}$.
\end{theorem}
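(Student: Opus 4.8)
The plan is to derive Theorem~\ref{main2} by combining the even-character case of Theorem~\ref{main1} with the class number formula (\ref{CNF}) of Theorem~\ref{C2}, in close parallel with how Theorem~\ref{C3} is obtained from Theorem~\ref{C1}. Recall that for $K^{+}=\mathbb{Q}(\mu_{m})^{+}$ the relevant primitive characters in the product $\prod_{\chi\neq 1}L(1,\chi)$ are exactly the non-trivial even characters of $\mathbb{Z}(m)^{*}$ (equivalently the non-trivial characters of $G$), and for such a real field $r_{1}=[K^{+}:\mathbb{Q}]$, $r_{2}=0$, $w_{K^{+}}=2$, while the discriminant contributes a factor that is matched by the product of the Gauss sums $\tau(\chi)$; this is precisely the bookkeeping already carried out in Lang to pass from (\ref{CNF}) to (\ref{CNF+}). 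First I would write out (\ref{CNF}) for $K^{+}$ and substitute the even-character formula from Theorem~\ref{main1} for each $L_{E}(1,\chi)$ in place of $L(1,\chi)$.

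The key point is to relate $\prod_{\chi\neq 1}L_{E}(1,\chi)$ to $\prod_{\chi\neq 1}L(1,\chi)$ via the elementary Euler-factor identity
\begin{equation}\label{Euler-factor}
L_{E}(s,\chi)=\bigl(1-\chi(2)2^{1-s}\bigr)L(s,\chi),
\end{equation}
which follows by splitting the series in (\ref{E-zeta0}) into even and odd $n$ and re-summing. Evaluating (\ref{Euler-factor}) at $s=1$ gives $L_{E}(1,\chi)=(1-\chi(2))L(1,\chi)$, hence
\begin{equation*}
\prod_{\chi\neq 1}L_{E}(1,\chi)=\Bigl(\prod_{\chi\neq 1}(1-\chi(2))\Bigr)\prod_{\chi\neq 1}L(1,\chi)=\eta\prod_{\chi\neq 1}L(1,\chi),
\end{equation*}
where $\eta$ is the quantity defined in (\ref{etadef}). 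Plugging in Theorem~\ref{C3}, i.e. $\prod_{\chi\neq 1}L(1,\chi)=h^{+}R^{+}$ up to the same normalization already used there, yields $\eta h^{+}R^{+}=\prod_{\chi\neq 1}\sum_{k\in G_{\chi}}\chi(k)\log|1+\zeta_{f_{\chi}}^{k}|$, which rearranges to (\ref{CNF+A}).

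I would organize the write-up in three steps: (i) prove (\ref{Euler-factor}) and specialize to $s=1$; (ii) recall from Theorem~\ref{main1} that for even $\chi$ one has $L_{E}(1,\chi)=\frac{\tau(\chi)}{f}\sum_{k=1}^{f}\overline{\chi}(k)\log|1+\zeta_{f}^{k}|$, and note that since the product runs over even characters and $\overline{\chi}$ ranges over the same set as $\chi$, one may replace $\overline{\chi}$ by $\chi$ and collapse the sum over a full residue system to a sum over $G_{\chi}$ using $\log|1+\zeta_{f}^{k}|=\log|1+\zeta_{f}^{-k}|$; (iii) insert these into (\ref{CNF}) for $K^{+}$, absorb the $\tau(\chi)$ and discriminant factors exactly as in the derivation of Theorem~\ref{C3}, and read off (\ref{CNF+A}). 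The only mild subtlety — not really an obstacle — is making sure the archimedean and discriminant constants cancel in the same way as in the classical case; since Theorem~\ref{main1} differs from Theorem~\ref{C1} for even $\chi$ only by the factor $(1-\chi(2))$ and by $\log|1-\zeta_{f}^{k}|\mapsto\log|1+\zeta_{f}^{k}|$ (and a harmless sign that is absorbed into passing from $-\chi$ to $\chi$), the entire constant bookkeeping of Theorem~\ref{C3} transfers verbatim, and the extra factor is precisely $\eta$.
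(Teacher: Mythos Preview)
Your proposal is correct and follows essentially the same route as the paper. The paper proves the identity $(1-\chi(2))L(1,\chi)=L_{E}(1,\chi)$ by analytic continuation of $(1-\chi(2)2^{1-s})L(s,\chi)=L_{E}(s,\chi)$, multiplies the class number formula (\ref{CNF}) for $K^{+}$ by $\eta$, substitutes Theorem~\ref{main1}, and then cancels the archimedean and discriminant constants explicitly via the conductor--discriminant formula $d=\prod_{\chi}f_{\chi}$ and the relation $\prod_{\chi}\tau(\chi)=d^{1/2}$, finally halving each sum to pass from $\mathbb{Z}(f_{\chi})^{*}$ to $G_{\chi}$; the only difference is that you defer this constant bookkeeping to ``as in the derivation of Theorem~\ref{C3}'' rather than writing it out.
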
\label{units}
For $k$ prime to $m$, we define a new type of cyclotomic units to be
\begin{equation}\label{cycunits}
\tilde{g}_{k}=\frac{\zeta_{m}^{k}+1}{\zeta_{m}+1}
\end{equation}
(comparing with the definition of $g_{k}$ above). Then $\tilde{g}_{k}$ is equal to a real unit times a root of unity. Since $\zeta_{m}^{k}$ only depends  on the residue class of $k$ mod $m$, without loss
of generality, we may assume that $k$ is odd.
Then \begin{equation*}
\zeta_{m}^{-v}\tilde{g}_{k}~~\textrm{for}~v=\frac{k-1}{2}
\end{equation*}
is   real (i.e. fixed under $\sigma_{-1}$), and we call it a new type of cyclotomic units.

Let $E$ be the group of unit in $K$ and $\tilde{\mathcal{E}}$ be the subgroup of $E$ generated by the roots of unity and the new type cyclotomic units defined above.
Let $E^{+}$ be the group of unit in $K^{+}$ and $\tilde{\mathcal{E}}^{+}$ be the the subgroup of $E^{+}$ generated by $\pm 1$ and the new type real cyclotomic units introduced above.
We have the following isomorphism.
 \begin{proposition}\label{Proposition1.7}
 Assume $m=p^{n}$ is a prime power, we have   \begin{equation*}
E/\tilde{\mathcal{E}}\cong E^{+}/\tilde{\mathcal{E}}^{+}.
\end{equation*}
\end{proposition}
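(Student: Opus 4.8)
The plan is to follow verbatim the classical argument that yields the isomorphism $E/\mathcal{E}\cong E^{+}/\mathcal{E}^{+}$ for the usual cyclotomic units (see Lang \cite{Lang} or Washington \cite{Washington}), substituting the new units $\tilde{g}_{k}$ for the $g_{k}$. Write $W$ for the group of roots of unity of $K=\mathbb{Q}(\mu_{p^{n}})$; since the only real roots of unity are $\pm 1$ we have $W\cap E^{+}=\{\pm 1\}$. Two inputs are needed. The first is the classical fact that for a prime power conductor the Hasse unit index equals $1$, i.e. $E=W\cdot E^{+}$; this is exactly where the hypothesis $m=p^{n}$ enters, since for general $m$ one only has $[E:WE^{+}]\in\{1,2\}$. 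The second is that $\tilde{g}_{k}=(\zeta_{m}^{k}+1)/(\zeta_{m}+1)$ is actually a unit: for $p$ odd this follows from $\Phi_{p^{n}}(-1)=1$, which gives $\prod_{\gcd(j,p)=1}(1+\zeta_{m}^{j})=\pm 1$; for $p=2$ and $n\ge 2$ it follows from the observation that $-\zeta_{m}^{j}=\zeta_{m}^{2^{n-1}+j}$ is again a primitive $2^{n}$-th root of unity when $j$ is odd, so $1+\zeta_{m}^{j}$ is an associate of $1-\zeta_{m}$ and the quotient is a unit (the case $m\le 2$ being trivial). In particular, as already recorded just before the proposition, $\tilde{g}_{k}=\zeta_{m}^{v}\cdot(\zeta_{m}^{-v}\tilde{g}_{k})$ with $v=(k-1)/2$ is a root of unity times a real unit.

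The key structural step is the equality $\tilde{\mathcal{E}}=W\cdot\tilde{\mathcal{E}}^{+}$. For the inclusion $\subseteq$: each generator $\tilde{g}_{k}$ of $\tilde{\mathcal{E}}$ factors as $\zeta_{m}^{v}\cdot(\zeta_{m}^{-v}\tilde{g}_{k})$ with $\zeta_{m}^{v}\in W$ and $\zeta_{m}^{-v}\tilde{g}_{k}$ a real unit, hence in $\tilde{\mathcal{E}}^{+}$, while the roots of unity generating $\tilde{\mathcal{E}}$ lie in $W$. For $\supseteq$: $W\subseteq\tilde{\mathcal{E}}$ by definition, $\pm 1\in\tilde{\mathcal{E}}$, and each generator $\zeta_{m}^{-v}\tilde{g}_{k}$ of $\tilde{\mathcal{E}}^{+}$, being the product of a root of unity and $\tilde{g}_{k}\in\tilde{\mathcal{E}}$, again lies in $\tilde{\mathcal{E}}$. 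In particular $\tilde{\mathcal{E}}^{+}\subseteq\tilde{\mathcal{E}}$.

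With these facts in hand, consider the homomorphism $\varphi\colon E^{+}/\tilde{\mathcal{E}}^{+}\to E/\tilde{\mathcal{E}}$ induced by the inclusion $E^{+}\hookrightarrow E$, which is well defined since $\tilde{\mathcal{E}}^{+}\subseteq\tilde{\mathcal{E}}$. It is surjective: any $u\in E=W\cdot E^{+}$ can be written $u=\zeta e^{+}$ with $\zeta\in W\subseteq\tilde{\mathcal{E}}$ and $e^{+}\in E^{+}$, so $u$ and $e^{+}$ have the same image in $E/\tilde{\mathcal{E}}$. It is injective: if $e^{+}\in E^{+}$ lies in $\tilde{\mathcal{E}}=W\cdot\tilde{\mathcal{E}}^{+}$, write $e^{+}=\zeta\epsilon$ with $\zeta\in W$ and $\epsilon\in\tilde{\mathcal{E}}^{+}$; then $\zeta=e^{+}\epsilon^{-1}\in W\cap E^{+}=\{\pm 1\}\subseteq\tilde{\mathcal{E}}^{+}$, whence $e^{+}=\zeta\epsilon\in\tilde{\mathcal{E}}^{+}$. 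Hence $\varphi$ is an isomorphism, which is the assertion of the proposition.

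I expect the main obstacle to be essentially bookkeeping rather than substance: once it is checked that $\tilde{g}_{k}$ is a unit that is a root of unity times a real unit, and once $E=WE^{+}$ for prime power conductor is invoked, the isomorphism follows formally just as in the classical case. The only genuinely arithmetic point is the unit claim for $\tilde{g}_{k}$ together with its real-times-root-of-unity shape, and this has already been stated in the paper immediately before the proposition; so the proof amounts to assembling these ingredients in the above order.
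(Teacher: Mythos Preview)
Your proof is correct and follows essentially the same approach as the paper: both use the fact that $E=WE^{+}$ for prime power conductor and consider the map induced by the inclusion $E^{+}\hookrightarrow E$. The paper is terser, asserting that the kernel $E^{+}\cap\tilde{\mathcal{E}}$ equals $\tilde{\mathcal{E}}^{+}$ ``by definition''; your explicit verification that $\tilde{\mathcal{E}}=W\cdot\tilde{\mathcal{E}}^{+}$ and the resulting injectivity argument make precise exactly the step the paper glosses over.
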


From (\ref{CNF+A}) and the Dedekind determinant formula (see Theorem \ref{DF}), we also have a new formula related to $h^{+}$ and the unit indexes $(E:\tilde{\mathcal{E}})$ and $(E^{+}:\tilde{\mathcal{E}}^{+})$ (comparing with Theorem \ref{C4} above).
\begin{theorem}\label{Cor2} Let $K=\mathbb{Q}(\mu_{m})$, $K^{+}=\mathbb{Q}(\mu_{m})^{+}$ and  $h^{+}$ be the class number of $K^{+}$. Assume $m$ is a prime power. If either $m=2^{n}$; or $m=p^{n}$ is an odd prime power and $-1, 2$ generate the group $\mathbb{Z}(m)^{*}$, then $\eta\neq0$ and
\begin{equation*}
|\eta| h^{+}=(E:\tilde{\mathcal{E}})=(E^{+}: \tilde{\mathcal{E}}^{+}).
\end{equation*}
 Otherwise $\eta=0$ and $\mathcal{E}$ (resp. $\mathcal{E}^{+}$) is of infinite index in $E$ (resp. $E^{+}$).
\end{theorem}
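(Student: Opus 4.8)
The strategy is to run the Dedekind determinant formula against the new class number formula (\ref{CNF+A}) exactly as Kummer/Lang do for Theorem~\ref{C4}, with the single twist that the determinant now picks up the factor $\eta=\prod_{\chi\neq 1}(1-\chi(2))$. Concretely, I would start by reducing to the case $K^+=\mathbb{Q}(\mu_m)^+$ with $m=p^n$ a prime power, so that the even characters of $\mathbb{Z}(m)^*$ are all primitive (conductor a power of $p$, with the one subtlety that when $m=2^n$ one must track conductors $1,4,8,\dots$), hence $G_\chi=G$ for all $\chi\neq 1$ up to the usual bookkeeping; this is what makes $\mathbb{Q}(\mu_{p^n})$ special and is already invoked for Theorem~\ref{C4}. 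Then, for each $a\in G\setminus\{1\}$ (equivalently each nontrivial coset of $\pm1$ in $\mathbb{Z}(m)^*$) put $\xi_a=\zeta_m^{-v_a}\tilde g_a$ with $v_a=(a-1)/2$, the real new-type cyclotomic unit, and consider the regulator-type determinant $\det\big(\log|\sigma_b(\xi_a)|\big)_{a,b}$ indexed by cosets $\neq 1$.

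Next I would expand $\log|\sigma_b(\xi_a)|=\log|1+\zeta_m^{ab}|-\log|1+\zeta_m^{b}|$ and diagonalize this group-matrix by characters: by the Dedekind determinant formula (Theorem~\ref{DF}) the determinant equals, up to sign and a power of the (finite) index $[\,\mathcal{E}^+ : \langle\text{the }\xi_a\rangle\,]$-type normalization that is handled exactly as in Lang, the product over nontrivial even $\chi$ of $\sum_{k\in G}\chi(k)\log|1+\zeta_m^{k}|$ — which by Theorem~\ref{main2} is $\eta\,h^+R^+$ up to the same normalization. Dividing by the analytic regulator $R^+$ (the true regulator of $E^+$) converts the determinant ratio into the index $(E^+:\tilde{\mathcal{E}}^+)$ via the standard lattice-index computation $R(\text{sublattice})/R(\text{full lattice})=[\,\text{index}\,]$. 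The comparison of $|\eta|\,h^+$ with $(E^+:\tilde{\mathcal{E}}^+)$ then reads off, and $(E:\tilde{\mathcal{E}})=(E^+:\tilde{\mathcal{E}}^+)$ is Proposition~\ref{Proposition1.7}.

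For the degenerate case: $\eta=0$ iff some nontrivial even $\chi$ has $\chi(2)=1$, i.e.\ $2$ lies in the kernel of some nontrivial character of $G=\mathbb{Z}(m)^*/\{\pm1\}$, i.e.\ $\langle -1,2\rangle\neq \mathbb{Z}(m)^*$; when $m=2^n$ one checks directly that $-1$ and $2$ cannot both be units-and-generators in the naive sense and the correct statement is that $\langle-1,\,5\rangle$ issues — so I would instead phrase the nonvanishing criterion through the characters and verify it matches the two stated cases ($m=2^n$ always works because $2$ is not a unit mod $2^n$, so "$\chi(2)=0$" and the product $\prod(1-\chi(2))=\prod 1\neq 0$; $m=p^n$ odd works exactly when $-1,2$ generate). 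When $\eta=0$, one of the factors $\sum_{k\in G_\chi}\chi(k)\log|1+\zeta^k_{f_\chi}|$ must vanish, which forces a multiplicative dependence among the new-type cyclotomic units beyond what roots of unity allow, hence $\tilde{\mathcal{E}}^+$ has rank strictly less than $r_1-1=[K^+:\mathbb{Q}]-1$ and is therefore of infinite index in $E^+$ (and likewise $\tilde{\mathcal{E}}\subset E$), by Dirichlet's unit theorem.

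**Main obstacle.**
The genuine work is not the character diagonalization — that is formula-chasing with Theorem~\ref{DF} and Theorem~\ref{main2} — but rather establishing that the real new-type cyclotomic units $\zeta_m^{-v}\tilde g_k$ actually lie in $E^+$ and span a full-rank sublattice precisely when $\eta\neq0$, together with pinning down the exact normalizing constant (the analogue of Lang's handling of the "missing" character and of the index of the span of the $\xi_a$ inside $\tilde{\mathcal{E}}^+$) so that the final identity comes out with $|\eta|$ and no spurious rational factor. The subtle point is that $\tilde g_k=(\zeta_m^k+1)/(\zeta_m+1)$ need not be a genuine unit for all $k$ when $m$ is a power of $2$ (since $1+\zeta_m$ is not a unit there in the way $1-\zeta_m$ fails to be), so the $m=2^n$ case of the theorem needs a separate check that the relevant quotients and the character computation still produce $|\eta|h^+$; I expect this parity/conductor bookkeeping at $p=2$ to be where the argument is most delicate.
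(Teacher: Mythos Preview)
Your overall strategy matches the paper's: combine Theorem~\ref{main2} with the Dedekind determinant formula and the lattice-index identity $R(\text{sublattice})/R^+=(E^+:\tilde{\mathcal{E}}^+)$, then read off the nonvanishing criterion for $\eta$ via characters. The degenerate case and the use of Proposition~\ref{Proposition1.7} are also as in the paper.

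There is, however, one genuine gap. Your claim that for $m=p^n$ ``$G_\chi=G$ for all $\chi\neq 1$ up to the usual bookkeeping'' is false once $n>1$: nontrivial even characters of $\mathbb{Z}(p^n)^*$ can have conductor $p^s$ with $s<n$, so $G_\chi$ is strictly smaller than $G$. What you actually need is the identity
\[
\sum_{k\in G_\chi}\chi(k)\log|1+\zeta_{f_\chi}^k|\;=\;\pm\sum_{k\in G}\chi(k)\log|1+\zeta_m^k|,
\]
which is the paper's Lemma~\ref{Lemma2}. This is the analogue of the norm relation used in Lang's proof of Theorem~\ref{C4}, but it is \emph{not} identical to that case and should not be waved off as bookkeeping: the key computation is $\prod_{c=0}^{p^{n-s}-1}(1+\zeta_{p^n}^k\zeta_{p^{n-s}}^c)=1-(-\zeta_{p^n}^k)^{p^{n-s}}$, which equals $1+\zeta_{p^s}^k$ for odd $p$ but $1-\zeta_{p^s}^k$ for $p=2$. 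That sign discrepancy at $p=2$ (and the further step $\chi(1+p^s/2)=-1$ needed to recover the $1+\zeta$ sum) is precisely the delicate $2$-adic point---it lives here, not in the question of whether $\tilde g_k$ is a unit.

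On that last point, your stated obstacle is misplaced: $\tilde g_k=(1+\zeta_m^k)/(1+\zeta_m)=\sigma_k(1+\zeta_m)/(1+\zeta_m)$ is a ratio of Galois conjugates, hence a unit for every $k$ prime to $m$, including $m=2^n$. No separate check is needed there, and no spurious rational factor appears in the index computation.
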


\section{Proof of Theorem \ref{main1}}
Let $\chi$ be a non-trivial Dirichlet  character with conductor $f$.
 We rearrange the terms in the series for $L_E(s,\chi)$ according to the residue classes mod $f.$
That is, we write
$$n=qf+r, \quad\text{where $1\leq r\leq f$ and $q=0,1,2,\ldots,$}$$
and obtain
\begin{equation}\label{ca-1}
\begin{aligned}
-L_E(s,\chi)&=\sum_{r=1}^f\chi(r)\sum_{q=0}^\infty\frac{(-1)^{qf+r}}{(qf+r)^s} \\
&=\sum_{(r,f)=1}\chi(r)\sum_{n\equiv r~(\text{mod}~f)}\frac{(-1)^{n}}{n^s},
\end{aligned}
\end{equation}
since $\chi(r)=0$ if $(r,f)>1.$
The inner series can be written in the form
\begin{equation}\label{cn}
\sum_{n=1}^\infty\frac{(-1)^nc_n}{n^s},
\end{equation}
where
$$c_n=\begin{cases}
1 &\text{for } n\equiv r \pmod f \\
0 &\text{for } n\not\equiv r \pmod f.
\end{cases}$$
To find a convenient way of writing the coefficients $c_n,$ we consider the following formula:
$$\sum_{k=0}^{f-1}\zeta_{f}^{\ell k}=\begin{cases}
f &\text{for } \ell\equiv 0 \pmod{f} \\
0 &\text{for } \ell\not\equiv 0 \pmod f,
\end{cases}$$
where
\begin{equation}\label{ex-re}
\zeta_{f}=\cos\left(\frac{2\pi}{f}\right)+i\sin\left(\frac{2\pi}f\right)=e^{\frac{2\pi i}f}
\end{equation}
is a primitive $f$th root of unity. We remark
\begin{equation}\label{cn-ex}
c_n=\frac1f\sum_{k=0}^{f-1}\zeta_{f}^{(r-n)k}
\end{equation}
(see \cite[p.~332]{BS}).
Therefore, combing with (\ref{ca-1}), (\ref{cn}) and (\ref{cn-ex}) we have the identity
\begin{equation}\label{ca-2}
\begin{aligned}
-L_E(s,\chi)
&=\sum_{(r,f)=1}\chi(r)\sum_{n=1}^\infty\frac1f \sum_{k=0}^{f-1}\zeta_{f}^{(r-n)k} \frac{(-1)^{n}}{n^s} \\
&=\frac1f \sum_{k=1}^{f-1}\left(\sum_{(r,f)=1}\chi(r)\zeta_{f}^{rk} \right)\sum_{n=1}^\infty\frac{(-1)^{n}\zeta_{f}^{-nk} }{n^s},
\end{aligned}
\end{equation}
since $\sum_{(r,f)=1}\chi(r)=0$ if $\chi$ is not the principal character.

The Gauss sum $\tau(\chi)$ (\ref{Gauss}) depends not only on $\chi$, but also on the choice of the root $\zeta_{f}.$
Throughout the proof of this theorem, we  always assume   $\zeta_{f}$ is $\cos(2\pi/f)+i\sin(2\pi/f).$

We see that
\cite[Lemma 4.7]{Washington}
\begin{equation}\label{D-Bu}
\sum_{r~(\text{mod}~f)}\chi(r)\zeta_{f}^{rk}=\overline{\chi}(k)\tau(\chi).
\end{equation}
Using (\ref{D-Bu}) we can write (\ref{ca-2}) in the form
\begin{equation}\label{conv}
\begin{aligned}
-L_E(s,\chi)
&=\frac{\tau(\chi)}f  \sum_{k=1}^{f-1} \overline{\chi}(k)\sum_{n=1}^\infty\frac{(-1)^{n}\zeta_{f}^{-nk} }{n^s}.
\end{aligned}
\end{equation}
The above series (\ref{conv}) converges for $0<s<\infty$ and represents a continuous function of $s.$
Hence we may set $s=1$ in this last equation and obtain
\begin{equation}\label{le=1}
-L_E(1,\chi)=\frac{\tau(\chi)}f \sum_{k=1}^{f-1} \overline{\chi}(k)\sum_{n=1}^\infty\frac{(-1)^{n}\zeta_{f}^{-nk} }{n}.
\end{equation}
To find the sum of the inner series on the right in (\ref{le=1}), we consider
\begin{equation}\label{ln-def}
\log (1+z)=\sum_{n=1}^\infty\frac{(-1)^{n-1}}{n}z^n,\quad\text{convergent for }|z|<1.
\end{equation}
It is well known that
the analytic function $\log (1+z)$ defined by (\ref{ln-def}) has $z=-1$ as its only singular point at finite distance.
Since this series also converges at the point
$z=\zeta_{f}^{-k}$ (on the unit circle),
then by Abel's theorem, we have
\begin{equation}
\sum_{n=1}^\infty\frac{(-1)^{n}\zeta_{f}^{-nk} }{n}=-\log (1+\zeta_{f}^{-k})
\end{equation}
and hence
\begin{equation}\label{sim Le=1}
L_E(1,\chi)=\frac{\tau(\chi)}f \sum_{k=1}^{f} \overline{\chi}(k)\log (1+\zeta_{f}^{-k}),
\end{equation}
thus  we have obtained a finite expression for the series $L_E(1,\chi).$

The formula (\ref{sim Le=1}) can be further investigated and
considerably simplified as follows. Let \begin{equation}\label{denote}
S_\chi=\sum_{k=1}^{f} \overline{\chi}(k)\log (1+\zeta_{f}^{-k}),
\end{equation}
where $k$ running through a reduced system of residues modulo $f.$
From (\ref{ex-re}), the number $1+\zeta_{f}^{-k}$ (for $0<k<f$) can be represented as
\begin{equation}
\begin{aligned}
1+\zeta_{f}^{-k}=1+e^{-\frac{2\pi ki}{f}}=2\frac{(e^{\frac{\pi ki}{f}}+e^{-\frac{\pi ki}{f}})}{2}e^{-\frac{\pi ki}{f}},
\end{aligned}
\end{equation}
which is equivalent to the relation
\begin{equation}\label{z-ab}
\begin{aligned}
1+\zeta_{f}^{-k}=2\cos\left(\frac{\pi k}{f}\right)\left(\cos\left(-\frac{\pi k}{f}\right)+i\sin\left(-\frac{\pi k}{f}\right)\right).
\end{aligned}
\end{equation}
Therefore
\begin{equation}\label{ze-k}
\log (1+\zeta_{f}^{-k})=\begin{cases}
\log |1+\zeta_{f}^{-k}|-i\left(\frac{\pi k}{f}\right),& 0< k < \frac{f}{2},\\
\log |1+\zeta_{f}^{-k}|+i\left(\pi-\frac{\pi k}{f}\right),& \frac{f}{2}< k < f.
\end{cases}
\end{equation}
Further, since $1+\zeta_{f}^{-k}$ and $1+\zeta_{f}^{k}$ are conjugate,  we have
\begin{equation}\label{ze+k}
\log (1+\zeta_{f}^{k})=\begin{cases}
\log |1+\zeta_{f}^{k}|+i\left(\frac{\pi k}{f}\right),& 0< k < \frac{f}{2},\\
\log |1+\zeta_{f}^{k}|+i\left(\frac{\pi k}{f}-\pi\right),& \frac{f}{2} < k < f.
\end{cases}
\end{equation}

 Now assume that the character $\chi$ (and hence also $\overline{\chi}$) is even. Interchanging $k$ and $-k$ in (\ref{denote}), we have
\begin{equation}\label{LiAdd6}
S_\chi=\sum_{k=1}^{f} \overline{\chi}(-k)\log (1+\zeta_{f}^{k})=\sum_{k=1}^{f} \overline{\chi}(k)\log (1+\zeta_{f}^{k}).
\end{equation}
Combining  (\ref{denote}), (\ref{ze-k}) (\ref{ze+k}) and \eqref{LiAdd6}, this yields
\begin{equation}
\begin{aligned}
2S_\chi&=\sum_{k=1}^{f} \overline{\chi}(k)\left[\log(1+\zeta_{f}^{-k})+\log(1+\zeta_{f}^{k})\right] \\
&=2\sum_{k=1}^{f} \overline{\chi}(k)\log |1+\zeta_{f}^k| \\
&=2\sum_{k=1}^{f} \overline{\chi}(k)\log \left|2\cos\left(\frac{\pi k}{f}\right)\right|.
\end{aligned}
\end{equation}
Thus
\begin{equation}\label{Liadd1}
\begin{aligned}
S_\chi&=\sum_{k=1}^{f} \overline{\chi}(k)\log |1+\zeta_{f}^k| \\
&=\sum_{k=1}^{f} \overline{\chi}(k)\log \left|\cos\left(\frac{\pi k}{f}\right)\right|.
\end{aligned}
\end{equation}
since $\sum_{k=1}^{f} \overline{\chi}(k)=0.$
The
summation item in \eqref{Liadd1} is unchanged if we replace $k$ by $f-k$. Therefore,
\begin{equation}
\begin{aligned}
S_\chi&=2\sum_{1\leq k<f/2} \overline{\chi}(k)\log \left|\cos\left(\frac{\pi k}{f}\right)\right|.
\end{aligned}
\end{equation}
Then combing with 
(\ref{sim Le=1}) and (\ref{denote}),
we have\begin{equation}\begin{aligned}
L_{E}(1,\chi)&=\frac{2\tau(\chi)}{f}\sum_{1\leq k<f/2} \overline{\chi}(k)\log\left|\cos\left(\frac{\pi k}{f}\right)\right|\\
&=\frac{\tau(\chi)}f \sum_{k=1}^{f} \overline{\chi}(k)\log |1+\zeta_{f}^k|,
\end{aligned}\end{equation}
which is the desired result if $\chi$ is a even character.

If the character $\chi$ is odd, then  interchanging  $k$ and $-k$ in (\ref{denote}), we have
\begin{equation}
S_\chi=-\sum_{k=1}^{f} \overline{\chi}(k)\log(1+\zeta_{f}^{k}),
\end{equation}
and by (\ref{ze-k}) and (\ref{ze+k}), we have
\begin{equation}
\begin{aligned}
2S_\chi&=\sum_{k=1}^{f} \overline{\chi}(k)\left[\log(1+\zeta_{f}^{-k})-\log(1+\zeta_{f}^{k})\right] \\
&=-2\left[\sum_{1\leq k < f/2}\overline{\chi}(k)i\frac{\pi k}{f}+\sum_{f/2<k<f}\overline{\chi}(k)i\left(\frac{\pi k}{f}-\pi\right)\right]
\end{aligned}
\end{equation}
and
\begin{equation}\begin{aligned}
S_\chi&=-\frac{\pi i}{f}\left[\sum_{1\leq k < f/2}\overline{\chi}(k)k+\sum_{f/2<k<f}\overline{\chi}(k)\left(k-f\right)\right]\\
&=-\frac{\pi i}{f}\left[\sum_{1\leq k < f/2}\overline{\chi}(k)k+\sum_{-f/2<k<0}\overline{\chi}(k+f)k\right]\\
&=-\frac{2\pi i}{f}\sum_{1\leq k < f/2}\overline{\chi}(k)k.
\end{aligned}\end{equation}
The last equality is got by changing $k$ to $-k$ in the second summation and applying the oddness of $\chi$.
Then combing with 
(\ref{sim Le=1}) and (\ref{denote}),
we have  $$L_{E}(1,\chi)=-\frac{2\pi i \tau(\chi)}{f^2}\sum_{1\leq k < f/2}\overline{\chi}(k)k,$$
which is the desired result if $\chi$ is an odd character.

\section{Proof of Theorem \ref{main2}}
For $K^{+}=\mathbb{Q}(\mu_{m})^{+},$ we have $[K^{+}:\mathbb{Q}]=\frac{\varphi(m)}{2}$, where $\varphi$ is Euler-phi function.
Let $d$ be the absolute value of the discriminant of $K^{+}$ and $G=\mathbb{Z}(m)^{*}/\{\pm1\}$ is the Galois group of $K^{+}$ over $\mathbb{Q}$. By the Conductor-Discriminant-formula (\cite[p.28 Theorem 3.1]{Washington}),
\begin{equation}\label{LiAdd2}
d=\prod_{\chi}f_{\chi}\end{equation}
where $\chi$ runs over all the characters of $G$. 
From the functional equation of Dirichlet L-function, one can deduce that(\cite[p.36 Corollary 4.6]{Washington})
 \begin{equation}\label{LiAdd3}
\prod_{\chi}\tau(\chi)=d^{1/2}.
\end{equation}

For Re$(s)>1$,
\begin{equation}
\begin{aligned}
(1-\chi(2)2^{1-s})L(s,\chi)=\sum_{n=1}^{\infty}\frac{\chi(n)}{n^{s}}-2\sum_{n=1}^{\infty}\frac{\chi(2n)}{(2n)^{s}}
=L_{E}(s,\chi).
\end{aligned}
\end{equation}
By analytic continuation, the above equality holds in the whole complex plane. Thus
\begin{equation}\label{D-E-zeta}
(1-\chi(2))L(1,\chi)= L_E(1,\chi).
\end{equation}

Multiplying the class number formula (\ref{CNF}) by $\eta=\prod_{\chi\neq1}(1-\chi(2))$ 
and using \eqref{D-E-zeta}
 we have
\begin{equation}
\eta 2^{\frac{m}{2}-1}h^{+}R^{+}=d^{1/2}\prod_{\chi\neq 1}L_{E}(1,\chi)
\end{equation}
where the product over $\chi\neq 1$ is taken over non-trivial characters of $G$, or equivalently,  non-trivial even characters of $\mathbb{Z}(m)^{*}$. Applying theorem \ref{main1},
\begin{equation}\label{LiAdd4}
\eta 2^{\frac{m}{2}-1}h^{+}R^{+}=d^{1/2}\prod_{\chi\neq 1}\frac{\tau(\chi)}{f_{\chi}}\sum_{k=1}^{f}\chi(k)\log |1+\zeta_{f_{\chi}}^{k}|.
\end{equation}
Substituting \eqref{LiAdd2} and \eqref{LiAdd3} into \eqref{LiAdd4}, we get
\begin{equation}
\eta 2^{\frac{m}{2}-1}h^{+}R^{+}=\prod_{\chi\neq 1}\sum_{k=1}^{f}\chi(k)\log |1+\zeta_{f_{\chi}}^{k}|,
\end{equation}

Since
$$\sum_{k=1}^{f}\chi(k)\log |1+\zeta_{f_{\chi}}^{k}|=2\sum_{k\in G_{\chi}}\chi(k)\log |1+\zeta_{f_{\chi}}^{k}|$$ and there are exactly $\frac{\varphi(m)}{2}$ even characters and  $\frac{\varphi(m)}{2}-1$ non-trivial even characters,
we have \begin{equation}
\eta h^{+}R^{+}=\prod_{\chi\neq 1}\sum_{k\in G_{\chi}}\chi(k)\log |1+\zeta_{f_{\chi}}^{k}|,
\end{equation}
which is the desired result.

\section{Proof of Proposition \ref{Proposition1.7}}
For $m$ being a prime power, we know that $E=E^+W$, where $W=\langle-1, \zeta_{m}\rangle$ is the group of roots of unity in $K=\mathbb{Q}(\zeta_{m})$ (\cite[p.40, Theorem 4.12 and Corollary 4.13]{Washington}). Since $\tilde{\mathcal{E}}\supset W$, the following natural morphism
\begin{equation}
\begin{aligned}
\varphi: E^{+}&\rightarrow E/\tilde{\mathcal{E}}
\end{aligned}
\end{equation}
induced by the inclusion $E^{+}\rightarrow E$ is surjective. The kernel is $E^{+}\cap\tilde{\mathcal{E}}$, which by definition is just $\tilde{\mathcal{E}}^+$.
Therefore,
\begin{equation}
E/\tilde{\mathcal{E}}\cong E^{+}/\tilde{\mathcal{E}}^{+},\end{equation}
which is the desired result.

\section{Proof of Theorem \ref{Cor2}}
The Galois group of $\mathbb{Q}(\mu_{m})$ over $\mathbb{Q}$ is isomorphic to $\mathbb{Z}(m)^{*}$ under the map:
\begin{equation*}
 a\mapsto \sigma_{a}
 \end{equation*}
where~~ $\sigma_{a}: \zeta_{m}\mapsto\zeta_{m}^{a}$.
If $\epsilon_{1},\ldots,\epsilon_{r}$ is a basis for $E^{+}$ (mod roots of unity), then the regulator $R^{+}$ is the absolute value of the determinant
$$R(E)=R^{+}=\pm~\det\limits_{a,j}~ \log|\sigma_{a}\epsilon_{j}|,$$
where $j=1,\ldots,r$ and $a\in\mathbb{Z}(m)^{*}/\pm 1$ and $a\not\equiv \pm 1$ (mod $m$) (see \cite[p. 85]{Lang}).
Let $G=\mathbb{Z}(m)^{*}/\pm 1$ and view $a\in G$ and $a\neq 1$ in $G$. Using the cyclotomic units introduced in (\ref{cycunits}), we also form a new type of cyclotomic regulator as follows
\begin{equation}\label{Rcycle}R(\tilde{\mathcal{E}})=\widetilde{R_{\textrm{cyc}}}=\pm~\det\limits_{a,k}~ \log|\sigma_{a}\tilde{g}_{k}|\end{equation}
with $a, k\in G$.

As pointed out by Serre to Lang (see \cite[p. 90]{Lang}), the following determinant relation is due to Dedekind, February 1896, who communicated it to Frobenius in March.
\begin{theorem}[Dedekind determinant formula, {\cite[p.90, Theorem 6.2]{Lang}}]\label{DF}
Let $G$ be a finite group and $f$ be any (complex valued) function on $G$.
Then $$\det\limits_{a,b}f(ab^{-1})=\left[\sum_{a\in G}f(a)\right]\det\limits_{a,b\neq 1}[f(ab^{-1})-f(a)].$$
Therefore, for a finite abelian group $G$,
$$\prod_{\chi\neq 1}\sum_{a\in G}\chi(a)f(a^{-1})=\det\limits_{a,b\neq 1}[f(ab^{-1})-f(a)].$$
\end{theorem}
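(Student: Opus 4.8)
The plan is to realize the new cyclotomic regulator $\widetilde{R_{\textrm{cyc}}}$ of (\ref{Rcycle}) as a product of character sums by the Dedekind determinant formula, to identify that product with the quantity evaluated in Theorem \ref{main2}, and then to read off the unit index through the standard comparison of covolumes of the unit lattice. Throughout I regard $f(a)=\log|1+\zeta_m^a|$ as a function on $G=\mathbb{Z}(m)^{*}/\{\pm1\}$; this is well defined because $1+\zeta_m^a$ and $1+\zeta_m^{-a}$ are complex conjugates, so $|1+\zeta_m^a|=|1+\zeta_m^{-a}|$.

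First I would record the lattice-theoretic step. Since $\epsilon_1,\dots,\epsilon_r$ with $r=\varphi(m)/2-1$ is a basis of $E^{+}$ modulo roots of unity, and the $r$ units $\tilde g_k$ (with $k\in G$, $k\neq1$) generate $\tilde{\mathcal E}^{+}$ modulo roots of unity, the subgroup $\tilde{\mathcal E}^{+}$ has finite index in $E^{+}$ exactly when the matrix $[\log|\sigma_a\tilde g_k|]$ is nonsingular, in which case $(E^{+}:\tilde{\mathcal E}^{+})=\widetilde{R_{\textrm{cyc}}}/R^{+}$. By Proposition \ref{Proposition1.7} this value also equals $(E:\tilde{\mathcal E})$, so it suffices to evaluate $\widetilde{R_{\textrm{cyc}}}$.

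Next comes the determinant computation. Because $\sigma_a\tilde g_k=(\zeta_m^{ak}+1)/(\zeta_m^{a}+1)$, one has $\log|\sigma_a\tilde g_k|=f(ak)-f(a)$, and reindexing the columns by $k\mapsto k^{-1}$ (a permutation of $G\setminus\{1\}$, absorbed into the sign) puts the matrix into the shape $[f(ab^{-1})-f(a)]_{a,b\neq1}$ required by Theorem \ref{DF}. Applying the Dedekind determinant formula to the finite abelian group $G$ then gives $\widetilde{R_{\textrm{cyc}}}=\bigl|\prod_{\chi\neq1}\sum_{a\in G}\chi(a)f(a^{-1})\bigr|$, the product over the nontrivial even characters of $G$. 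It remains to descend this level-$m$ sum to the conductor: for $\chi$ of conductor $f_\chi=p^{e}<p^{n}$, grouping the $a$ by their residue mod $p^{e}$ and using the distribution identity $\prod_{j=0}^{p^{n-e}-1}(1+\zeta_{p^{n}}^{k+jp^{e}})=1-(-1)^{p^{n-e}}\zeta_{p^{e}}^{k}$, which for odd $p$ equals $1+\zeta_{f_\chi}^{k}$, yields $\sum_{a\in G}\chi(a)f(a^{-1})=\sum_{k\in G_\chi}\chi(k)\log|1+\zeta_{f_\chi}^{k}|$ with trivial descent factor. The case $m=2^{n}$ is separate: there $1+\zeta_{2^{n}}^{k}=1-\zeta_{2^{n}}^{k+2^{n-1}}$ identifies the new units with conjugates of classical cyclotomic units and $\chi(2)=0$ forces $\eta=1$, so the claim reduces to Theorem \ref{C4}. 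Feeding the descent into Theorem \ref{main2} gives $\widetilde{R_{\textrm{cyc}}}=|\eta|\,h^{+}R^{+}$ (note $\eta$ is real, the even characters being closed under conjugation), whence in the finite-index case $(E:\tilde{\mathcal E})=(E^{+}:\tilde{\mathcal E}^{+})=|\eta|\,h^{+}$.

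Finally I would settle the dichotomy. Since $h^{+}R^{+}\neq0$, the regulator $\widetilde{R_{\textrm{cyc}}}=|\eta|\,h^{+}R^{+}$ vanishes precisely when $\eta=0$, which by the lattice step is exactly the borderline between finite and infinite index for the new units. Unwinding $\eta=\prod_{\chi\neq1}(1-\chi(2))$, a factor vanishes iff some nontrivial even character is trivial on the class of $2$ in $G$, and such a character exists iff $\langle-1,2\rangle$ is a proper subgroup of $\mathbb{Z}(m)^{*}$; equivalently $\eta\neq0$ iff $-1$ and $2$ generate $\mathbb{Z}(m)^{*}$, matching the two cases in the statement. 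I expect the main obstacle to be the conductor descent in the determinant step: verifying that the level-$m$ character sums collapse to the conductor level with trivial factor for odd $p$ (so that the only arithmetic contribution of $2$ is the $(1-\chi(2))$ already packaged into $\eta$ through Theorem \ref{main2}), and correctly isolating the anomalous sign $(-1)^{p^{n-e}}$ that singles out the $p=2$ case, are the delicate points; the remaining sign bookkeeping is routine.
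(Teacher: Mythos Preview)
Your proposal does not prove the stated theorem at all. The statement in question is the Dedekind determinant formula (Theorem~\ref{DF}), a purely group-theoretic identity expressing $\det_{a,b}f(ab^{-1})$ in terms of character sums. Your write-up never addresses this identity; instead you \emph{invoke} Theorem~\ref{DF} as a black box (``Applying the Dedekind determinant formula to the finite abelian group $G$ then gives\ldots'') in the course of proving Theorem~\ref{Cor2}, the Kummer-type relation $|\eta|h^{+}=(E:\tilde{\mathcal E})$. You have mistaken which result you were asked to establish.

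For what it is worth, the paper itself does not prove Theorem~\ref{DF} either; it is quoted from Lang~\cite[p.~90, Theorem~6.2]{Lang}, so there is no in-paper proof to compare against. If your intent was to prove Theorem~\ref{Cor2}, then your outline is essentially the paper's own argument: it packages your determinant step as Lemma~\ref{Lemma1}, your conductor descent as Lemma~\ref{Lemma2}, the combination with Theorem~\ref{main2} as Proposition~\ref{main3}, and your dichotomy as Lemma~\ref{add2}. One difference: for $p=2$ you propose to reduce directly to the classical Theorem~\ref{C4} via $1+\zeta_{2^n}^k=1-\zeta_{2^n}^{k+2^{n-1}}$, whereas the paper keeps $p=2$ inside Lemma~\ref{Lemma2} and tracks the extra sign $\chi(1+p^s/2)=-1$ arising from the descent. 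Both routes work. But none of this constitutes a proof of the Dedekind determinant formula itself.
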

The above Dedekind determinant relation implies the following lemma.
\begin{lemma}\label{Lemma1}
We have for $G=\mathbb{Z}(m)^{*}/\pm 1$,
\begin{equation}
\pm~\det\limits_{a,k}~ \log|\sigma_{a}\tilde{g}_{k}|=\prod_{\chi\neq 1}\sum_{k\in G}\chi(k)\log |1+\zeta_{m}^{k}|.
\end{equation}
\end{lemma}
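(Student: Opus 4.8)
The plan is to apply the abelian form of the Dedekind determinant formula (Theorem \ref{DF}) to the finite abelian group $G=\mathbb{Z}(m)^{*}/\pm 1$ with the function $f\colon G\to\mathbb{R}$ defined by $f(a)=\log|1+\zeta_{m}^{a}|$. First I would check that $f$ is well defined on the quotient by $\pm1$: since $1+\zeta_{m}^{-a}$ is the complex conjugate of $1+\zeta_{m}^{a}$ we have $|1+\zeta_{m}^{-a}|=|1+\zeta_{m}^{a}|$, and the value is finite because $\zeta_{m}^{a}\neq-1$ when $a$ is prime to $m$ (in the cases $m=p^{n}$ relevant to Theorem \ref{Cor2}). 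Next I would unwind the Galois action on the units $\tilde{g}_{k}$ of \eqref{cycunits}: as $\sigma_{a}$ sends $\zeta_{m}\mapsto\zeta_{m}^{a}$,
\[
\sigma_{a}\tilde{g}_{k}=\frac{\zeta_{m}^{ak}+1}{\zeta_{m}^{a}+1},\qquad\text{hence}\qquad \log|\sigma_{a}\tilde{g}_{k}|=f(ak)-f(a)
\]
for $a,k\in G$. Thus the matrix defining $\widetilde{R_{\mathrm{cyc}}}$ in \eqref{Rcycle}, indexed by $a,k\in G$ with $a,k\neq1$, has entries $f(ak)-f(a)$.

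The second step is to match this matrix against the shape $\big(f(ab^{-1})-f(a)\big)_{a,b\neq1}$ occurring in Theorem \ref{DF}. Substituting $b=k^{-1}$ and using that $k\mapsto k^{-1}$ is a bijection of $G\setminus\{1\}$ onto itself, the matrix $\big(f(ak)-f(a)\big)_{a,k\neq1}$ is obtained from $\big(f(ab^{-1})-f(a)\big)_{a,b\neq1}$ by a permutation of the columns, so the two determinants agree up to a sign; this is harmless because $\widetilde{R_{\mathrm{cyc}}}$ in \eqref{Rcycle} is only defined up to $\pm$. Applying Theorem \ref{DF} then gives
\[
\pm\det_{a,k}\log|\sigma_{a}\tilde{g}_{k}|=\prod_{\chi\neq1}\sum_{a\in G}\chi(a)f(a^{-1}),
\]
the product running over the non-trivial characters of $G$.

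Finally I would simplify the character sums. Because $f(a^{-1})=f(a)$ by the conjugation remark above, $\sum_{a\in G}\chi(a)f(a^{-1})=\sum_{k\in G}\chi(k)\log|1+\zeta_{m}^{k}|$, which is exactly the right-hand side of the lemma. (Alternatively, substituting $a\mapsto a^{-1}$ rewrites the inner sum as $\sum_{a\in G}\overline{\chi}(a)f(a)$, and since $\chi\mapsto\overline{\chi}$ permutes the non-trivial characters the product is unchanged.) This completes the argument.

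The computation is essentially bookkeeping; the two places that need care are (i) checking that $f$ genuinely descends to $G$ and is finite, so that both $\widetilde{R_{\mathrm{cyc}}}$ and the character product on the right are well defined, and (ii) confirming that the index sets are compatible — the relevant matrix is $(|G|-1)\times(|G|-1)$, which is precisely the size forced by the rank $\varphi(m)/2-1$ of $E^{+}$ modulo roots of unity — and that passing from the indexing $f(ak)-f(a)$ to Dedekind's $f(ab^{-1})-f(a)$ costs only a sign already absorbed into the $\pm$. I expect this reconciliation of indexings, together with the well-definedness on the quotient group, to be the only (mild) obstacle.
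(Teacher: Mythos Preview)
Your proposal is correct and follows exactly the route the paper intends: the paper gives no explicit proof beyond the sentence ``The above Dedekind determinant relation implies the following lemma,'' and your argument is precisely the standard unpacking of that implication---define $f(a)=\log|1+\zeta_m^{a}|$, check it descends to $G$, recognise the regulator matrix entries as $f(ak)-f(a)$, re-index by $b=k^{-1}$ at the cost of a column permutation, and apply Theorem~\ref{DF}. Nothing is missing.
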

\begin{lemma}\label{Lemma2}
Let $m=p^{n}$ be a prime power and $\chi\neq 1$ be a nontrivial character of $G=\mathbb{Z}(m)^{*}/\pm 1$. Denote $G_{\chi}=\mathbb{Z}(f_{\chi})^{*}/\pm 1$. Then, we have
\begin{equation}
\sum_{k\in G_{\chi}}\chi(k)\log |1+\zeta_{f_{\chi}}^{k}|=\pm\sum_{k\in G}\chi(k)\log |1+\zeta_{m}^{k}|.
\end{equation}
\end{lemma}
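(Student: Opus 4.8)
The plan is to reduce the sum over $G=\mathbb{Z}(m)^{*}/\pm1$ to the sum over $G_{\chi}=\mathbb{Z}(f_{\chi})^{*}/\pm1$ by sorting the residues modulo $m$ into the residue classes modulo $f_{\chi}$ and then evaluating the resulting finite products of cyclotomic numbers by means of the elementary identity $\prod_{\omega^{N}=1}(Y+\omega)=Y^{N}-(-1)^{N}$, which follows from $Z^{N}-1=\prod_{\omega^{N}=1}(Z-\omega)$ upon the substitution $Z\mapsto-Y$. Write $m=p^{n}$ and $f_{\chi}=p^{d}$ with $d\le n$; the case $d=n$ is trivial, because then $G_{\chi}=G$, so I assume $d<n$ and put $N=m/f_{\chi}=p^{n-d}$. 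First I would note that $\chi(k)\log|1+\zeta_{m}^{k}|$ depends only on $k\bmod m$ and is invariant under $k\mapsto-k$ (since $\chi$ is even and $1+\zeta_{m}^{-k}$ is the complex conjugate of $1+\zeta_{m}^{k}$), so that $\sum_{k\in G}\chi(k)\log|1+\zeta_{m}^{k}|=\tfrac12\sum_{k\in\mathbb{Z}(m)^{*}}\chi(k)\log|1+\zeta_{m}^{k}|$, and likewise over $\mathbb{Z}(f_{\chi})^{*}$; hence it suffices to compare the two sums taken over full reduced residue systems.

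Next, writing each $k$ prime to $m$ uniquely as $k=j+tf_{\chi}$ with $j$ running over a reduced residue system modulo $f_{\chi}$ and $t=0,1,\dots,N-1$, and using that $m$ and $f_{\chi}$ are powers of the same prime $p$ — so that $\gcd(k,m)=1\iff\gcd(j,f_{\chi})=1$ — together with $\chi(k)=\chi(j)$ (as $\chi$ has conductor $f_{\chi}$), I obtain
\[
\sum_{k\in\mathbb{Z}(m)^{*}}\chi(k)\log|1+\zeta_{m}^{k}|
=\sum_{j}\chi(j)\,\log\Bigl|\prod_{t=0}^{N-1}\bigl(1+\zeta_{m}^{j}(\zeta_{m}^{f_{\chi}})^{t}\bigr)\Bigr|.
\]
Since $\zeta_{m}^{f_{\chi}}=\zeta_{p^{n}}^{p^{d}}$ is a primitive $N$th root of unity, the inner product equals $\prod_{\omega^{N}=1}(1+\zeta_{m}^{j}\omega)$, which by the displayed identity (evaluated at $Y=\zeta_{m}^{j}$) is $\zeta_{m}^{jN}-(-1)^{N}=\zeta_{f_{\chi}}^{j}-(-1)^{N}$, using $\zeta_{m}^{jN}=\zeta_{p^{n}}^{jp^{n-d}}=\zeta_{f_{\chi}}^{j}$; each factor $1+\zeta_{m}^{j}\omega$ is nonzero, so the logarithms make sense.

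When $p$ is odd, $N$ is odd and $\zeta_{f_{\chi}}^{j}-(-1)^{N}=1+\zeta_{f_{\chi}}^{j}$, so the two full sums are literally equal and the lemma holds with sign $+$. The main obstacle is the case $p=2$: then $N$ is even, the inner product is $\zeta_{f_{\chi}}^{j}-1=-(1-\zeta_{f_{\chi}}^{j})$, and one must still pass from $\sum_{j}\chi(j)\log|1-\zeta_{f_{\chi}}^{j}|$ to $\sum_{j}\chi(j)\log|1+\zeta_{f_{\chi}}^{j}|$. Here $f_{\chi}=2^{d}$ with $d\ge2$ (a nontrivial character modulo a power of $2$ cannot have conductor $1$ or $2$), so $2^{d-1}$ is even and the translation $j\mapsto j+2^{d-1}$ permutes the reduced residues modulo $2^{d}$; as $-1=\zeta_{2^{d}}^{2^{d-1}}$ one has $1+\zeta_{2^{d}}^{j}=1-\zeta_{2^{d}}^{j+2^{d-1}}$, so after this substitution
\[
\sum_{j}\chi(j)\log|1+\zeta_{2^{d}}^{j}|=\sum_{j}\chi(j-2^{d-1})\log|1-\zeta_{2^{d}}^{j}|.
\]
Now $\chi(j-2^{d-1})=\chi(j)\chi(1-2^{d-1}j^{-1})=\chi(j)\chi(1+2^{d-1})$, the last step because $2^{d-1}j^{-1}\equiv2^{d-1}\pmod{2^{d}}$ (as $j^{-1}$ is odd) and $-2^{d-1}\equiv2^{d-1}\pmod{2^{d}}$; and $\varepsilon:=\chi(1+2^{d-1})\in\{\pm1\}$ since $(1+2^{d-1})^{2}\equiv1\pmod{2^{d}}$ for $d\ge2$. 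Therefore $\sum_{j}\chi(j)\log|1-\zeta_{f_{\chi}}^{j}|=\varepsilon\sum_{j}\chi(j)\log|1+\zeta_{f_{\chi}}^{j}|$, and combining this with the computation of the preceding paragraph and the halving of the first paragraph yields $\sum_{k\in G_{\chi}}\chi(k)\log|1+\zeta_{f_{\chi}}^{k}|=\varepsilon\sum_{k\in G}\chi(k)\log|1+\zeta_{m}^{k}|$, which is the asserted equality up to sign. Apart from this sign bookkeeping, the remaining verifications — bijectivity of the groupings and nonvanishing of the products — are routine, being immediate consequences of $m$ and $f_{\chi}$ being powers of the same prime.
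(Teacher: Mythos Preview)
Your argument is essentially the paper's own proof: group residues mod $m=p^n$ by their class mod $f_\chi=p^d$, collapse the inner product over the $N=p^{n-d}$ roots of unity, and in the $p=2$ case pass from the $|1-\zeta|$ sum to the $|1+\zeta|$ sum via the shift $j\mapsto j+2^{d-1}$ together with $\chi(1+2^{d-1})\in\{\pm1\}$.

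One small slip worth flagging: the identity $\prod_{\omega^N=1}(Y+\omega)=Y^N-(-1)^N$ evaluated at $Y=\zeta_m^j$ computes $\prod_\omega(\zeta_m^j+\omega)$, whereas your inner product is $\prod_\omega(1+\zeta_m^j\omega)$; the correct value of the latter is $1-(-1)^N\zeta_{f_\chi}^j$, which for even $N$ differs from your $\zeta_{f_\chi}^j-(-1)^N$ by a global sign. This is harmless here since everything sits inside $\log|\cdot|$, and for odd $N$ the two expressions coincide, so no damage is done. The paper goes one step further and shows $\chi(1+2^{d-1})=-1$ (else $\chi$ would factor through $\mathbb{Z}(2^{d-1})^*$, contradicting $f_\chi=2^d$), thereby determining the sign exactly; but since the lemma only asserts equality up to $\pm$, your weaker observation $\varepsilon=\pm1$ already suffices.
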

\begin{proof}
Let $f_{\chi}=p^{s}$. We only need to prove the case $s<n$. Write the residue classes in $\mathbb{Z}(p^{n})$ in the form
$$y=k+c p^{s},~~\textrm{with}~~0\leq c<p^{n-s},$$
and $k$ ranges over a fixed set of representatives for residue classes of $\mathbb{Z}(p^{s})^{*}$,
then we have
\begin{equation}\label{l31}
\begin{aligned}
\sum_{y\mod p^{n}} \chi(y)\log|1+\zeta_{m}^{y}|&=\sum_{k \mod p^{s}} \sum_{c=0}^{p^{n-s}-1}\chi(k)\log|1+\zeta_{p^{n}}^{k}(\zeta_{p^{n}}^{p^{s}})^{c}|
\\&=\sum_{k\mod p^{s}}\chi(k) \sum_{c=0}^{p^{n-s}-1}\log|1+\zeta_{p^{n}}^{k}(\zeta_{p^{n}}^{p^{s}})^{c}|\end{aligned}
\end{equation}
Since $$\prod_{\lambda^{p^{n-s}}=1}(X-\lambda Y)=X^{p^{n-s}}-Y^{p^{n-s}},$$ we have
\begin{equation*}
\begin{aligned} \prod_{c=0}^{p^{n-s}-1}(1+\zeta_{p^{n}}^{k}(\zeta_{p^{n}}^{p^{s}})^{c})&=\prod_{c=0}^{p^{n-s}-1}(1-(-\zeta_{p^{n}}^{k})
(\zeta_{p^{n}}^{p^{s}})^{c})\\&=1-(-\zeta_{p^{n}}^{k})^{p^{n-s}}\end{aligned}
\end{equation*}
which implies that
\begin{equation}\label{l32}
\prod_{c=0}^{p^{n-s}-1}(1+\zeta_{p^{n}}^{k}(\zeta_{p^{n}}^{p^{s}})^{c})=\left\{
  \begin{array}{ll}
    1+\zeta_{p^{s}}^{k}, & if\ p\neq 2; \\
    1-\zeta_{p^{s}}^{k}, & if\ p=2.
  \end{array}
\right.
\end{equation}
For odd $p$, by (\ref{l31}) and (\ref{l32}), we obtain
\begin{equation}
\begin{aligned}
\sum_{y\mod p^{n}} \chi(y)\log|1+\zeta_{m}^{y}|
 &=\sum_{k\mod p^{s}}\chi(k) \sum_{c=0}^{p^{n-s}-1}\log|1+\zeta_{p^{n}}^{k}(\zeta_{p^{n}}^{p^{s}})^{c}|\\
 &=\sum_{k\mod p^{s}}\chi(k)\log|1+\zeta_{p^{s}}^{k}|.
\end{aligned}
\end{equation}
For $p=2$, we have $s\geq 2$ since $\chi\neq1$. In this case, similarly, we get
\begin{equation*}
\begin{aligned}
\sum_{y\mod p^{n}} \chi(y)\log|1+\zeta_{m}^{y}|
 &=\sum_{k\mod p^{s}}\chi(k)\log|1-\zeta_{p^{s}}^{k}|\\
&=\sum_{k\mod p^{s}}\chi(k)\log|1+\zeta_{p^{s}}^{k+p^s/2}|\\
&=\sum_{k\mod p^{s}}\chi(k-p^s/2)\log|1+\zeta_{p^{s}}^{k}|\\
&=\chi(1+p^s/2)\sum_{k\mod p^{s}}\chi(k)\log|1+\zeta_{p^{s}}^{k}|\\
&=-\sum_{k\mod p^{s}}\chi(k)\log|1+\zeta_{p^{s}}^{k}|
\end{aligned}
\end{equation*}
Since $(1+p^s/2)^2\equiv1\pmod{p^s} $, $\chi(1+p^s/2)=\pm1$. But it can not equal to $1$. Otherwise, the conductor of $\chi$ would be $p^s/2$. This explains the last equality.

Summing up, we have
\begin{equation}\label{LiAdd5}
\begin{aligned}
\sum_{y\mod p^{n}} \chi(y)\log|1+\zeta_{m}^{y}|
 =\pm\sum_{k\mod p^{s}}\chi(k)\log|1+\zeta_{p^{s}}^{k}|.
\end{aligned}
\end{equation}
The minus sign occurs if and only if $p=2$ and $s<n$.
 Recall  \begin{equation}
\begin{aligned}
G=\mathbb{Z}(p^{n})^{*}/\pm 1~\textrm{and}~G_{\chi}=\mathbb{Z}(f_{\chi})^{*}/\pm 1=\mathbb{Z}(p^{s})^{*}/\pm 1.
\end{aligned}
\end{equation}
As $\chi$ is even, dividing both sides of (\ref{LiAdd5}) by 2, we get the desired formula.
\end{proof}

We have the following result related to $h^{+}$ and $\widetilde{R_{\textrm{cyc}}}/R^{+}$.
\begin{proposition}\label{main3}
 Let $K=\mathbb{Q}(\mu_{m})$, $K^{+}=\mathbb{Q}(\mu_{m})^{+}$ and  $h^{+}$ be the class number of $K^{+}$.  Assume $m=p^{n}$ is a prime power. Then
\begin{equation*}
|\eta| h^{+}=\widetilde{R_{\textrm{cyc}}}/R^{+}.
\end{equation*}
\end{proposition}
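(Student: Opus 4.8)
The plan is to combine Theorem \ref{main2}, Lemma \ref{Lemma1}, and Lemma \ref{Lemma2} directly. Start from the new class number formula \eqref{CNF+A}, namely
\[
\eta h^{+} = \frac{1}{R^{+}}\prod_{\chi\neq 1}\sum_{k\in G_{\chi}}\chi(k)\log|1+\zeta_{f_{\chi}}^{k}|,
\]
where the product runs over the non-trivial even characters of $\mathbb{Z}(p^{n})^{*}$, i.e.\ the non-trivial characters of $G=\mathbb{Z}(p^{n})^{*}/\pm1$. The first step is to replace each factor indexed by a character of conductor $f_{\chi}=p^{s}$ by the corresponding sum over the full group $G$: this is exactly Lemma \ref{Lemma2}, which gives
\[
\sum_{k\in G_{\chi}}\chi(k)\log|1+\zeta_{f_{\chi}}^{k}| = \pm\sum_{k\in G}\chi(k)\log|1+\zeta_{m}^{k}|,
\]
where the sign depends on $\chi$ (it is $-1$ precisely when $p=2$ and $f_{\chi}<m$). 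Taking the product over all non-trivial $\chi$, these signs collect into a global sign, so that
\[
\prod_{\chi\neq 1}\sum_{k\in G_{\chi}}\chi(k)\log|1+\zeta_{f_{\chi}}^{k}| = \pm\prod_{\chi\neq 1}\sum_{k\in G}\chi(k)\log|1+\zeta_{m}^{k}|.
\]

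Next, I would invoke Lemma \ref{Lemma1}, which identifies the right-hand product with the new cyclotomic regulator determinant:
\[
\prod_{\chi\neq 1}\sum_{k\in G}\chi(k)\log|1+\zeta_{m}^{k}| = \pm\det_{a,k}\log|\sigma_{a}\tilde{g}_{k}| = \pm\widetilde{R_{\mathrm{cyc}}},
\]
recalling from \eqref{Rcycle} that $\widetilde{R_{\mathrm{cyc}}}=R(\tilde{\mathcal{E}})$ is itself defined up to sign as the absolute value of that determinant. Combining the three displays yields $\eta h^{+} = \pm\,\widetilde{R_{\mathrm{cyc}}}/R^{+}$. Since $h^{+}$, $\widetilde{R_{\mathrm{cyc}}}$, and $R^{+}$ are all non-negative (regulators being absolute values of determinants, and the class number being a positive integer), taking absolute values of both sides removes the ambiguous sign and gives $|\eta|\,h^{+} = \widetilde{R_{\mathrm{cyc}}}/R^{+}$, as claimed.

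The only genuinely delicate point is bookkeeping of signs: each application of Lemmas \ref{Lemma1} and \ref{Lemma2} introduces an undetermined $\pm$, and the Dedekind determinant formula relates a determinant to a character-product only up to the ordering of rows and columns. I expect the cleanest way to handle this is to avoid tracking the signs individually and instead observe at the end that both sides are manifestly non-negative, so passing to absolute values is legitimate; this is why the statement is phrased with $|\eta|$ rather than $\eta$. One should also note that $\eta$ may vanish (when some $\chi(2)=1$), in which case the identity degenerates to $\widetilde{R_{\mathrm{cyc}}}=0$, consistent with the new cyclotomic units then being of infinite index — but this borderline case is cleanly separated out in Theorem \ref{Cor2} and need not complicate the proof of Proposition \ref{main3} itself.
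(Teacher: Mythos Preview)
Your proposal is correct and essentially identical to the paper's own proof: both combine Theorem~\ref{main2}, Lemma~\ref{Lemma1}, and Lemma~\ref{Lemma2} and then take absolute values to dispose of the accumulated signs. The only cosmetic difference is direction: the paper starts from the definition of $\widetilde{R_{\mathrm{cyc}}}$ and chains through to $|\eta|R^{+}h^{+}$, whereas you start from \eqref{CNF+A} and work toward $\widetilde{R_{\mathrm{cyc}}}/R^{+}$.
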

\begin{proof}By (\ref{Rcycle}), Lemmas \ref{Lemma1} and \ref{Lemma2} and Theorem \ref{main2}, we have
\begin{equation*}\begin{aligned}
\widetilde{R_{\textrm{cyc}}}&=\pm~\det\limits_{a,k}~ \log|\sigma_{a}\tilde{g}_{k}|
\\&=\pm\prod_{\chi\neq 1}\sum_{k\in G_{\chi}}\chi(k)\log |1+\zeta_{f_{\chi}}^{k}|
\\&=|\eta| R^{+}h^{+}.
\end{aligned}
\end{equation*}
\end{proof}

The subgroup $\tilde{\mathcal{E}}^{+}$ has finite index in the group $E^{+}$ if and only if $\widetilde{R_{\textrm{cyc}}}\neq0$. Moreover, if $\widetilde{R_{\textrm{cyc}}}\neq0$,   by \cite[p.41, Lemma 4.15]{Washington} we have
\begin{equation}\label{Regulator2}(E:\tilde{\mathcal{E}})=(E^{+}: \tilde{\mathcal{E}}^{+})=\widetilde{R_{\textrm{cyc}}}/R^{+}.\end{equation}
From Proposition \ref{main3}, $\widetilde{R_{\textrm{cyc}}}\neq0$ if and only if $\eta=\prod_{\chi\neq1}(1-\chi(2))\neq0$. Clearly, for $m=2^n$, $\eta=1$. Therefore,
we also have a new formula related to $h^{+}$ and the unit indexes $(E:\tilde{\mathcal{E}})$ and $(E^{+}:\tilde{\mathcal{E}}^{+})$.
\begin{corollary}\label{Cor1}
 Let $K=\mathbb{Q}(\mu_{m})$, $K^{+}=\mathbb{Q}(\mu_{m})^{+}$ and  $h^{+}$ be the class number of $K^{+}$.  Assume either $m=2^n$; or $m=p^{n}$ is an odd prime power and $\chi(2)\neq 1$ for any non-trivial character $\chi$ of $G=\mathbb{Z}(m)^{*}/\pm 1$. Then
\begin{equation*}
|\eta| h^{+}=(E:\tilde{\mathcal{E}})=(E^{+}: \tilde{\mathcal{E}}^{+}).
\end{equation*}
\end{corollary}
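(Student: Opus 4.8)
The plan is to obtain Corollary \ref{Cor1} purely by assembling ingredients already in place: Proposition \ref{main3}, which gives $|\eta| h^{+}=\widetilde{R_{\textrm{cyc}}}/R^{+}$ for every prime power $m=p^{n}$, and the index--regulator identity \eqref{Regulator2}, which is valid whenever $\widetilde{R_{\textrm{cyc}}}\neq 0$. Since $h^{+}\geq 1$ and the regulator $R^{+}$ of the real field $K^{+}$ is strictly positive, Proposition \ref{main3} shows that $\widetilde{R_{\textrm{cyc}}}\neq 0$ is equivalent to $\eta\neq 0$. Thus the only genuine content is to verify $\eta\neq 0$ (and to pin down $|\eta|$) under each of the two hypotheses; everything else is substitution.

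First I would dispose of the case $m=p^{n}$ with $p$ odd. Here the hypothesis is precisely that $\chi(2)\neq 1$ for every non-trivial character $\chi$ of $G$, so each factor $1-\chi(2)$ in $\eta=\prod_{\chi\neq 1}(1-\chi(2))$ is non-zero, whence $\eta\neq 0$. For $m=2^{n}$ I would argue instead that every non-trivial even character $\chi$ of $\mathbb{Z}(m)^{*}$ has conductor $f_{\chi}$ a power of $2$ exceeding $1$, so $\gcd(2,f_{\chi})>1$ and therefore $\chi(2)=0$ by the convention fixed at the start of the paper. Consequently every factor of $\eta$ equals $1-0=1$, so $\eta=1$; in particular $\eta\neq 0$ and $|\eta|=1$, recovering the remark made just before the statement.

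With $\eta\neq 0$ established in both cases, Proposition \ref{main3} gives $\widetilde{R_{\textrm{cyc}}}=|\eta|\,h^{+}R^{+}\neq 0$. Invoking \eqref{Regulator2} (that is, \cite[p.41, Lemma 4.15]{Washington}) then yields the chain $(E:\tilde{\mathcal{E}})=(E^{+}:\tilde{\mathcal{E}}^{+})=\widetilde{R_{\textrm{cyc}}}/R^{+}$, and substituting $\widetilde{R_{\textrm{cyc}}}/R^{+}=|\eta|\,h^{+}$ from Proposition \ref{main3} finishes the argument.

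The step I expect to require the most care is not a deep one but a bookkeeping one: checking that for $m=2^{n}$ the factors of $\eta$ are genuinely equal to $1$ because $\chi(2)=0$ (the character being undefined at arguments not coprime to its conductor), rather than appealing to a condition of the form $\chi(2)\neq 1$ as in the odd case. I would also make the positivity $R^{+}>0$ explicit, since it is what upgrades Proposition \ref{main3} into the equivalence $\widetilde{R_{\textrm{cyc}}}\neq 0\Leftrightarrow\eta\neq 0$ that licenses the use of \eqref{Regulator2}.
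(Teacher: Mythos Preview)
Your proposal is correct and follows essentially the same approach as the paper: combine Proposition~\ref{main3} with the index--regulator identity~\eqref{Regulator2}, after checking that $\eta\neq 0$ under the stated hypotheses (with $\eta=1$ when $m=2^n$ since then $\chi(2)=0$). The paper is terser---it simply asserts ``Clearly, for $m=2^n$, $\eta=1$'' and leaves implicit the equivalence $\widetilde{R_{\textrm{cyc}}}\neq 0\Leftrightarrow\eta\neq 0$---but your added justifications are exactly the missing details, not a different route.
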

The following lemma concerns  the non-vanishing  of $\eta$ for the odd case.
\begin{lemma}\label{add2}For $m=p^{n}$ being an odd prime power, $\chi(2)\neq 1$ for any non-trivial character $\chi$ of $G=\mathbb{Z}(m)^{*}/\pm 1$ if and only if $-1,2$ generate the group $\mathbb{Z}(m)^{*}$.\end{lemma}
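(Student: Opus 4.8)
The plan is to recast both conditions as a single statement about the finite abelian group $G=\mathbb{Z}(m)^{*}/\{\pm1\}$ together with its character group, and then read off the equivalence from the duality of finite abelian groups. Throughout $m=p^{n}$ with $p$ odd, so $m>2$ and $2\in\mathbb{Z}(m)^{*}$; recall that the characters of $G$ are precisely the even characters of $\mathbb{Z}(m)^{*}$, and that under this identification a character $\chi$ of $G$ sends the class $\bar2$ of $2$ in $G$ to $\chi(2)$.

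First I would note that
$$\langle-1,2\rangle=\mathbb{Z}(m)^{*}\iff\langle\bar2\rangle=G,$$
since the image of $\langle2\rangle$ under the quotient map $\pi\colon\mathbb{Z}(m)^{*}\to G$ is $\langle\bar2\rangle$, while $\langle-1,2\rangle$ contains $\ker\pi=\{\pm1\}$ and hence equals $\pi^{-1}(\langle\bar2\rangle)$. Next I would invoke the elementary duality fact: for an element $x$ of a finite abelian group $A$, one has $\langle x\rangle=A$ if and only if every character $\psi$ of $A$ with $\psi(x)=1$ is trivial. Indeed such $\psi$ are exactly the characters of $A/\langle x\rangle$ (via $\psi\mapsto(a\langle x\rangle\mapsto\psi(a))$), and a finite abelian group is trivial if and only if its dual is; so all such $\psi$ are trivial iff $A/\langle x\rangle=0$, i.e.\ iff $\langle x\rangle=A$.

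Applying this with $A=G$ and $x=\bar2$, I get that $\langle\bar2\rangle=G$ holds if and only if every character $\psi$ of $G$ with $\psi(\bar2)=1$ is trivial; translating back through the identification of characters of $G$ with even characters of $\mathbb{Z}(m)^{*}$, and of $\psi(\bar2)$ with $\chi(2)$, this says exactly that $\chi(2)\neq1$ for every non-trivial character $\chi$ of $G$. Chaining the two displayed equivalences yields the lemma.

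I do not expect a genuine obstacle: the whole argument is finite abelian group character theory. The only points needing care are the bookkeeping among $\mathbb{Z}(m)^{*}$, its subgroup $\langle-1,2\rangle$ and the quotient $G$, and using the duality statement in the right direction (``a finite abelian group is trivial $\iff$ its dual is trivial''). A reader wanting a hands-on alternative can instead exploit the cyclicity of $\mathbb{Z}(p^{n})^{*}$: fixing a primitive root $g\bmod p^{n}$ and writing $2\equiv g^{a}$, $-1\equiv g^{\varphi(m)/2}$ one finds $\langle-1,2\rangle=\langle g^{\gcd(a,\varphi(m)/2)}\rangle$, and a short comparison with the order of $\chi(2)$ for a generating character $\chi$ of $G$ gives the same conclusion; but the duality argument above is shorter and avoids choosing $g$.
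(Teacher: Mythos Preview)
Your proof is correct and follows essentially the same route as the paper: both arguments identify the characters $\chi$ of $G$ with $\chi(2)=1$ as the characters of $G/\langle\bar2\rangle$, invoke the duality fact that a finite abelian group is trivial iff its character group is, and then pass from $G/\langle\bar2\rangle$ being trivial to $\langle-1,2\rangle=\mathbb{Z}(m)^{*}$ via the third isomorphism theorem. Your write-up is more explicit about the bookkeeping, but the mathematical content is the same.
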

\begin{proof} $\chi(2)\neq 1$ for any non-trivial character $\chi$ of $G$ if and only if $\widehat{G/\langle\bar{2}\rangle}$, the character group of $G/\langle\bar{2}\rangle$ is trivial, where $\bar{2}$ is the image of $2$ in $G$, which is equivalent to $G/\langle\bar{2}\rangle$ is trivial. Since
$G/\langle\bar{2}\rangle\cong \mathbb{Z}(m)^{*}/\langle-1, 2\rangle$, we get the desired result.
\end{proof}
 Finally by Corollary \ref{Cor1} and Lemma \ref{add2}, we get Theorem \ref{Cor2}.


\end{document}